\title{Exotic iterated Dehn twists}
\date{}
\author{Paul Seidel} 
\newcommand{\bR}{\mathbb{R}}
\newcommand{\bC}{\mathbb{C}}
\newcommand{\bZ}{\mathbb{Z}}
\newcommand{\htp}{\simeq}
\newcommand{\iso}{\cong}
\newcommand{\smooth}{C^\infty}
\newcommand{\myhalf}{\frac{1}{2}}
\newcommand{\Symp}{\mathit{Symp}}
\newcommand{\Diff}{\mathit{Diff}}
\newcommand{\scrM}{\mathcal{M}}
\newcommand{\scrK}{\mathcal{K}}
\numberwithin{equation}{section}
\newtheorem{theorem}{Theorem}[section]
\newtheorem{definition}[theorem]{Definition}
\newtheorem{remark}[theorem]{Remark}
\newtheorem{lemma}[theorem]{Lemma}
\newtheorem{assumption}[theorem]{Assumption}
\begin{document}
\maketitle
\begin{abstract}
Consider cotangent bundles of exotic spheres, with their canonical symplectic structure. They admit automorphisms which preserve the part at infinity of one fibre, and which are analogous to the square of a Dehn twist. Pursuing that analogy, we show that they have infinite order up to isotopy (inside the group of all automorphisms with the same behaviour).
\end{abstract}

\section{Introduction}

Let $M = T^*L$ be the cotangent bundle of a closed connected oriented $n$-manifold $L$, with its standard symplectic structure $\omega_M = d\theta_M$. This is a Liouville manifold, modelled at infinity on the positive part of the symplectization of the contact sphere bundle $\partial_\infty M = S^*L$. Let $\Symp(M)$ be the group of symplectic automorphisms $\phi$ which are of contact type at infinity, or equivalently such that $\phi^*\theta_M - \theta_M$ is closed and has compact support. Fix some point $q \in L$. Write $F = T^*_qL \subset M$ for the cotangent fibre at that point, and $K = S^*_qL = \partial_\infty F \subset \partial_\infty M$ for the corresponding sphere fibre. Let
\begin{equation} \label{eq:symp-q}
\Symp^K(M) \subset \Symp(M)
\end{equation}
be the subgroup of those $\phi$ such that the submanifolds $F$ and $\phi(F)$ agree outside a compact subset, including orientations; equivalently, the associated contact diffeomorphism $\partial_\infty \phi$ of $\partial_\infty M$ maps $K$ to itself orientation-preservingly.

\begin{theorem} \label{th:main}
Let $L$ be an exotic $n$-sphere, $n \neq 4$. Then $\pi_0(\Symp^K(M))$ contains an element of infinite order.
\end{theorem}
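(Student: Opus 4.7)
The candidate infinite-order element is the automorphism $\phi \in \Symp^K(M)$ described in the abstract (constructed earlier in the paper as an exotic analogue of a squared Dehn twist along $L$). I distinguish the classes $[\phi^k]$ using a Floer-theoretic invariant attached to the fibre $F$.

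\textbf{The invariant.} For any $\psi \in \Symp^K(M)$ the Lagrangians $F$ and $\psi(F)$ coincide (with matching orientations) outside a compact set, so one may form the Lagrangian Floer cohomology $HF^*(F, \psi(F); \bK)$ using a Hamiltonian perturbation trivial at infinity. Standard arguments show that this cohomology is invariant under isotopies inside $\Symp^K(M)$, hence
$$I(\psi) := \dim_\bK HF^*(F, \psi(F); \bK)$$
descends to a well-defined function $\pi_0(\Symp^K(M)) \to \bZ_{\geq 0} \cup \{\infty\}$. It suffices to show $I(\phi^k) \to \infty$ as $k \to \infty$.

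\textbf{Iteration via Picard--Lefschetz.} The heart of the computation is an exact triangle of the form
\begin{equation*}
V \otimes_\bK HF^*(L, E) \longrightarrow HF^*(F, E) \longrightarrow HF^*(F, \phi(E)) \stackrel{[1]}{\longrightarrow}
\end{equation*}
attached to $\phi$, valid for any Lagrangian $E$ agreeing with $F$ at infinity, where $V$ is a fixed finite-dimensional graded vector space (morally of rank two, one factor per Dehn twist in $\phi \sim \tau_L^2$). Since $L$ is a homotopy sphere meeting $F$ transversely at $q$, one has $HF^*(L, F) \iso HF^*(F, F) \iso \bK$, each concentrated in a single degree. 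Applying the triangle inductively with $E = \phi^{k-1}(F)$, and using grading considerations to rule out cancellation in the connecting maps, produces a lower bound $\dim HF^*(F, \phi^k F) \geq c k$ for some $c > 0$; hence $I(\phi^k) \to \infty$, and $[\phi]$ has infinite order.

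\textbf{Main obstacle.} The serious difficulty is establishing the exact triangle. In the classical setting (smooth Lagrangian sphere), Seidel's long exact sequence for a Dehn twist is proved using an explicit pseudoholomorphic model in a Weinstein neighbourhood of $L$, and no such standard local model is available when $L$ is exotic. The triangle must therefore be built from the construction of $\phi$ given earlier in the paper: presumably by realising $\phi$ as the monodromy around a singular fibre of an auxiliary Lefschetz-type fibration with $L$ as vanishing cycle, and then analysing the associated moduli space of pseudoholomorphic sections. Controlling the gradings (so that the iterated triangle really produces linear growth, rather than collapsing through unexpected boundary maps) is the second technical point, which I would address with a spectral-sequence argument starting from the $k=0$ case.
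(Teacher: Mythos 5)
Your detection mechanism rests entirely on the proposed exact triangle relating $\mathit{HF}^*(F,E)$, $\mathit{HF}^*(F,\phi(E))$ and $V\otimes_\bK \mathit{HF}^*(L,E)$, and that is exactly the step that cannot be quoted and is not supplied. For an exotic sphere $L$ there is no Dehn twist $\tau_L$ at all (this is the whole point of the paper: the construction only yields an element of $\Symp^K(M)$ that is nontrivial at infinity along $F$, not a compactly supported twist), so ``one factor per Dehn twist in $\phi \sim \tau_L^2$'' is circular, and Seidel's long exact sequence --- proved for twists modelled on $T^*S^n$ --- does not apply. Your sketch of how the triangle might be obtained (``realising $\phi$ as monodromy \dots and analysing the moduli space of sections'') together with the unspecified spectral-sequence argument needed to prevent cancellation in the iterated connecting maps is a research programme rather than a proof, so the proposal has a genuine gap at its core. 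A secondary issue: since $F$ and $\psi(F)$ literally coincide at infinity, a Hamiltonian perturbation that is trivial at infinity does not make them transverse there; one must perturb in a chosen Reeb direction at infinity, and both the group $\mathit{HF}^*(F,\psi(F))$ and its invariance under isotopies in $\Symp^K(M)$ depend on setting up that convention carefully. Finally, the construction of $\phi$ itself (which you take as given) is half the content of the theorem.

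For contrast, the paper's argument needs neither rank growth nor any exact triangle. The element is produced as the endpoint of a symplectic isotopy $(\phi_t)$ induced by a loop of Legendrian spheres at infinity (via the geodesic flow of a Weinstein-type metric on $L = S_f$, or via Lefschetz fibration monodromy), and the isotopy can be taken constant near the zero-section, so $\phi(L) = L$. One then compares two gradings of $\phi$: the canonical grading $\phi^\sharp$, normalized by fixing $F^\sharp$ at infinity, differs from the grading induced by the isotopy by a shift $k \neq 0$ ($k = 2n-2$ in the geodesic-flow version, computed by a Maslov-index/conjugate-point count as in Lemma \ref{th:shift}; the Lefschetz version needs the analogous shift to differ from $2$, as in Lemma \ref{th:grading-argument}). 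Consequently $\mathit{HF}^*(L^\sharp,(\phi^j)^\sharp(L^\sharp)) \iso H^{*-jk}(L)$, while triviality of $[\phi^j]$ in $\pi_0(\Symp^K(M))$ would force this group to agree with $H^*(L)$ in matching degrees (Lemma \ref{th:hf-2}); since $H^*(L)$ is nonzero, finite-dimensional and supported in degrees $0$ and $n$, this is impossible for every $j \neq 0$, giving infinite order directly. Using the closed Lagrangian $L$ rather than the noncompact fibre also removes the perturbation-at-infinity issue entirely.
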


By an exotic $n$-sphere, we mean a homotopy sphere not diffeomorphic to $S^n$. There are no such spheres for $n = 1,2,3,5,6$, so Theorem \ref{th:main} concerns $n \geq 7$. We will give two parallel but logically independent constructions of elements of $\Symp^K(M)$: one using Riemannian geometry and the geodesic flow, and the other via Lefschetz fibrations and mono\-dromy. Each of them imitates a certain interpretation of the square of a Dehn twist $\tau^2$ on the cotangent bundle of the standard sphere. We will not actually prove that the two approaches have the same outcome, but we will provide a Floer-theoretic proof of Theorem \ref{th:main} which, with minor adjustments, applies to either of them. The main constructions are contained in Sections \ref{sec:first-def} and \ref{sec:second-def}, respectively; the rest of the paper explains the relevant background material (which is not new, but included for convenience).

{\em Acknowledgments.} Matthias Kreck helped me to clarify the discussion of homotopy spheres. Ailsa Keating suggested the heuristic argument from Section \ref{sec:heuristic}. Xin Jin and the anonymous referee contributed helpful comments on the first version of this manuscript (that version eventually turned out to have a serious flaw; see Remark \ref{th:eating-crow}). I am grateful to all of them. This work was partially supported by a Simons Investigator grant from the Simons Foundation, as well as by NSF grant DMS-1005288.

\section{Homotopy spheres\label{sec:spheres}}

A homotopy $n$-sphere is a closed oriented manifold $S$ homotopy equivalent to $S^n$. From now on, when talking about such manifolds, we always assume that $n \geq 6$. Let $\Theta_n$ be the set of diffeomorphism classes of homotopy $n$-spheres. This is a finite abelian group under connected sum, with the inverse map being orientation-reversal $S \mapsto -S$ \cite{kervaire-milnor63}. Given $f \in \Diff^+(S^{n-1})$, one can form the homotopy $n$-sphere
\begin{equation} \label{eq:glue-2-discs}
S_f = B_- \cup_f B_+.
\end{equation}
Here, $B_-$ and $B_+$ are two copies of the closed unit ball in $\bR^n$, and one identifies $q \in \partial B_-$ with $f(q) \in \partial B_+$ (to fix the differentiable structure on $S_f$ explicitly, let's say that we use radial collar neighbourhoods of $\partial B_\pm$). Equip $S_f$ with the orientation coming from $B_-$ (the opposite of the orientation coming from $B_+$). This yields a group homomorphism
\begin{equation} \label{eq:diff-to-theta}
\pi_0(\Diff^+(S^{n-1})) \longrightarrow \Theta_n.
\end{equation}
The $h$-cobordism theorem shows that \eqref{eq:diff-to-theta} is onto, and Cerf's theorem \cite{cerf70} that it is injective. Here are two useful consequences of the representability of homotopy spheres in the form \eqref{eq:glue-2-discs}:

\begin{lemma} \label{th:metric}
There is a Riemannian metric on $S_f$ with the following property. For $q \in S^{n-1} = \partial B_\pm$, consider the path which consists of a straight line segment (with its standard parametrization) from $q_- = 0 \in B_-$ to $q \in \partial B_-$, and another such segment from $f(q) \in \partial B_+$ to $q_+ = 0 \in B_+$. Then all those paths are geodesics (of the same speed).
\end{lemma}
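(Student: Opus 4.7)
The plan is to make the metric a product in collar neighbourhoods of both boundaries, and to arrange the two sphere factors so that the gluing map $f$ matches them isometrically; in the interior of each ball, the metric will have warped-product form $dr^2 + G(r)$, for which radial rays are automatically unit-speed geodesics. The main obstacle to a naive construction is that $f$ typically preserves no single metric on $S^{n-1}$, so one cannot just glue two copies of a rotationally symmetric ball metric; this is circumvented by using distinct sphere factors on the two collars, related by $f$.

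First, pick any Riemannian metric $h_-$ on $S^{n-1}$ and set $h_+ := (f^{-1})^* h_-$, so that $f^* h_+ = h_-$. On $B_-$, take a metric $g_- = dr_-^2 + G_-(r_-)$, where $G_-(r)$ is a smooth family of metrics on $S^{n-1}$ equal to $r^2 g_{\text{round}}$ for $r$ near $0$ (giving the Euclidean metric near the origin, hence smoothness there) and equal to $h_-$ for $r$ near $1$. An explicit choice is $G_-(r) = (1 - \beta(r))\, r^2 g_{\text{round}} + \beta(r)\, h_-$ with $\beta : [0,1] \to [0,1]$ smooth, vanishing near $0$ and equal to $1$ near $1$. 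Build $g_+$ on $B_+$ in the analogous way, using $h_+$.

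A short Christoffel computation shows that for any metric of the form $dr^2 + G(r)$, the radial rays $t \mapsto (q, t)$ with $q \in S^{n-1}$ fixed are unit-speed geodesics, so the geodesic conclusion on each ball is automatic. Smoothness of the glued metric at the seam is the only remaining point: check it in the chart $(q, s) \in S^{n-1} \times (-\epsilon, \epsilon)$ with $s = r_- - 1$ on the $B_-$ side, $s = 1 - r_+$ on the $B_+$ side, and angular identification $q \leftrightarrow f(q)$. There, $g_-$ reads $ds^2 + h_-(q)$, while $g_+$ reads $ds^2 + h_+(f(q)) = ds^2 + (f^* h_+)(q) = ds^2 + h_-(q)$. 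Both sides give the same, $s$-independent, product metric, so the result is $C^\infty$ across $s = 0$, and the concatenated path of the lemma is $s(t) = t - 1$ in this chart, manifestly a unit-speed geodesic of $ds^2 + h_-$.
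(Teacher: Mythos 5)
Your construction is correct, and it is complete as written. The key computation you invoke --- that for any metric of the form $dr^2 + G(r)$, with $G(r)$ an arbitrary $r$-dependent family of metrics on $S^{n-1}$ (no cross terms, $g_{rr}=1$), the radial curves are unit-speed geodesics --- is indeed an immediate Christoffel-symbol check, and it is exactly what makes the interpolation trick work even though $f$ need not preserve any metric on $S^{n-1}$; the smoothness check at the seam in the radial-collar chart, where both sides become the $s$-independent product $ds^2 + h_-$, is the right thing to verify given how the paper fixes the differentiable structure on $S_f$. Note that the paper itself offers no proof: it attributes the statement to Weinstein and refers to Warner and to Besse's Appendix~C, where the construction is carried out with stronger goals in mind (metrics with prescribed conjugate-locus or closed-geodesic behaviour). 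Your argument is a leaner, self-contained version of the same basic idea --- product collars near the gluing sphere with the two boundary metrics intertwined by $f$, and a radial structure in each ball --- and, because your metric is Euclidean near the two centers, it also delivers the slightly stronger fact actually used later in Section~\ref{sec:first-def}: every unit-speed geodesic emanating from $q_-$ (or $q_+$) is one of the radial paths and hence arrives at the other center at the same time, with the boundary behaviour \eqref{eq:phi22}.
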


This is due to Weinstein, see \cite{warner67} and \cite[Appendix C]{besse} for expositions. For greater compatibility with the standard sphere case, we will assume that the unit speed geodesics are those which take time $\pi$ to go from $q_-$ to $q_+$. This is unproblematic, since it can be achieved simply by rescaling the metric.

%
%
%
%
%
%
%
%

\begin{lemma} \label{th:conjugate-with-reflection}
Take $f \in \Diff^+(S^{n-1})$, and an orientation-reversing $r \in O(n)$. Then $rfr^{-1}$  is isotopic to $f^{-1}$.
\end{lemma}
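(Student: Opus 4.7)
My plan is to apply Cerf's theorem---the injectivity of the map \eqref{eq:diff-to-theta}---and thereby reduce the isotopy claim to a matter of exhibiting orientation-preserving diffeomorphisms between the associated homotopy spheres. Specifically, I would show that both $S_{rfr^{-1}}$ and $S_{f^{-1}}$ are diffeomorphic, as oriented manifolds, to $-S_f$, and hence to one another.

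First I would define $\phi : S_{rfr^{-1}} \to S_f$ by letting $r^{-1}$ act on each hemisphere $B_\pm$ separately. Compatibility with the gluing is a direct check: a point $q \in \partial B_-$, glued in $S_{rfr^{-1}}$ to $rfr^{-1}(q) \in \partial B_+$, is sent by $\phi$ to $r^{-1}(q) \in \partial B_-$ and $fr^{-1}(q) \in \partial B_+$ respectively, which is precisely the gluing in $S_f$. Smoothness across the collar is automatic since $r^{-1}$ is linear and so preserves the radial collars used to define the differentiable structure. Because $r^{-1}$ is orientation-reversing on each $B_\pm$, and both source and target are oriented via their $B_-$-copies, $\phi$ is itself orientation-reversing.

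Next I would construct $\sigma : S_{f^{-1}} \to S_f$ by exchanging the two hemispheres: a point of $B_-$ in the source is sent to the corresponding point of $B_+$ in the target, and vice versa. The gluing $q \sim f^{-1}(q)$ on $S_{f^{-1}}$ becomes, writing $p = f^{-1}(q)$, the gluing $p \sim f(p)$ in $S_f$, as required. The swap interchanges the $B_-$-orientation with the $B_+$-orientation of the glued manifold, and these are opposite, so $\sigma$ is also orientation-reversing.

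Composing $\phi$ with $\sigma^{-1}$ yields an orientation-preserving diffeomorphism $S_{rfr^{-1}} \to S_{f^{-1}}$, and injectivity of \eqref{eq:diff-to-theta} then supplies the desired isotopy in $\Diff^+(S^{n-1})$. The only delicate part is the orientation bookkeeping---both for the action of $r^{-1}$ on $B_\pm$ and for the hemisphere-swap---everything else is routine.
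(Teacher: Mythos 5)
Your proof is correct and follows essentially the same route as the paper: apply $r^{\pm 1}$ to both hemispheres to identify $S_{rfr^{-1}}$ with $-S_f$, exchange hemispheres to identify $S_{f^{-1}}$ with $-S_f$, and conclude via the injectivity of \eqref{eq:diff-to-theta}. You have merely spelled out the gluing and orientation checks that the paper leaves implicit.
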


\begin{proof}
Applying $r$ to both hemispheres in \eqref{eq:glue-2-discs} yields a diffeomorphism $-S_f \rightarrow S_{rfr^{-1}}$. On the other hand $-S_f \iso S_{f^{-1}}$, by exchanging the hemispheres. Because of the injectivity of \eqref{eq:diff-to-theta}, this implies the desired result.
\end{proof}

\section{Cotangent bundles}

Consider a cotangent bundle $M = T^*L$. Here $L$ is closed connected oriented, as in the Introduction, and we again fix a point $q$ together with the associated submanifolds $F \subset M$, $K \subset \partial_\infty M$. Consider the map
\begin{equation} \label{eq:induced-k}
\begin{aligned}
& \Symp(M) \longrightarrow \scrK(\partial_\infty M), \\
& \phi \longmapsto \partial_\infty\phi(K),
\end{aligned}
\end{equation}
where $\scrK(\partial_\infty M)$ is the space of oriented Legendrian spheres in $\partial_\infty M$. Clearly, $\Symp^K(M)$ is the fibre of \eqref{eq:induced-k} over $K$. Since that map has the lifting property with respect to smooth paths, one gets a boundary homomorphism
\begin{equation} \label{eq:boundary-map}
\pi_1(\scrK(\partial_\infty M), K) \longrightarrow \pi_0(\Symp^K(M)).
\end{equation}
Any element of $\Symp^K(M)$ induces an orientation-preserving diffeomorphism of $K$. This yields a sequence (exact at the middle terms)
\begin{equation} \label{eq:map-to-theta-n}
\pi_1(\Diff^+(S^{n-1})) \rightarrow \pi_0(\Symp^{K,\mathit{fix}}(M)) \rightarrow \pi_0(\Symp^K(M)) \rightarrow \pi_0(\Diff^+(S^{n-1})),
\end{equation}
where $\Symp^{K,\mathit{fix}}(M) \subset \Symp^K(M)$ is the subgroup of those $\phi$ such that $\phi|F$ is the identity outside a compact subset, or equivalently $\partial_\infty\phi|K = \mathit{id}$.

\begin{remark} \label{th:browder}
If $\Diff^c(\bR^n)$ is the group of compactly supported diffeomorphisms, then the inclusion $\Diff^c(\bR^n) \subset \Diff^+(S^n)$ induces an isomorphism on $\pi_0$. Hence, one can define a homomorphism
\begin{equation}
\pi_0(\Diff^+(S^n)) \longrightarrow \pi_0(\Symp^{K,\mathit{fix}}(M))
\end{equation}
by realizing classes as diffeomorphisms of $\bR^n$ which are the identity outside a ball, embedding that ball into $L \setminus \{q\}$, and then taking the induced symplectic automorphism of the cotangent bundle.
\end{remark}

\section{Topological and symplectic invariants\label{sec:invariants}}

Write $\bar{M} = M \cup \partial_\infty M$. Consider the short exact sequence
\begin{equation} \label{eq:relative-homology}
\xymatrix{
0 \ar[r] & H_n(L) \ar[r] & H_n(\bar{M},K) \ar[r] & H_{n-1}(K) \ar[r] & 0 \\
0 \ar[r] & \bZ \ar[u]^-{\iso} \ar[r] & \bZ^2 \ar[u]^-{\iso} \ar[r] & \bZ \ar[u]^-{\iso} \ar[r] & 0 
}
\end{equation}
The generators for the middle group in \eqref{eq:relative-homology} are the classes of the zero-section $L$ and the fibre $\bar{F} = F \cup K$. Moreover, $H_n(\bar{M},K)$ carries a non-symmetric intersection pairing, defined as follows: given two relative cycles, one perturbs the first one so that its boundary moves in direction of a Reeb vector field on $\partial_\infty M$, and then intersects that perturbed version with the (unperturbed) second cycle. Let's choose the orientations of $L$ and $F$ in such a way that $[L] \cdot [F] = (-1)^{n(n+1)/2}$. Then the intersection pairing is given by
\begin{equation} \label{eq:explicit-pairing}
(-1)^{n(n+1)/2} \begin{pmatrix} \chi(L) & 1 \\ (-1)^n & 1 \end{pmatrix}.
\end{equation}

Any element of $\Symp^K(M)$ induces an automorphism of $H_n(\bar{M},K)$, which acts trivially on the quotient $H_{n-1}(K)$, and is compatible with \eqref{eq:explicit-pairing}. Hence, the induced map is necessarily of the form
\begin{equation} \label{eq:induced-map}
\begin{pmatrix} 1 & \ast \\ 0 & 1 \end{pmatrix}
\end{equation}
if $n$ is odd, respectively of the form
\begin{equation} \label{eq:induced-map-2}
\begin{pmatrix} 
-1 &  -2/\chi(L) \\ 0 & 1
\end{pmatrix} \quad \text{or} \quad
\begin{pmatrix}
1 & 0 \\ 0 & 1
\end{pmatrix}
\end{equation}
if $n$ is even (and where the first possibility in \eqref{eq:induced-map-2} can only happen if $2$ is divisible by the Euler characteristic of $L$). 

Let $\Diff^K(M)$ be the topological analogue of $\Symp^K(M)$, which we define to be the group of orientation-preserving diffeomorphisms of $\bar{M}$ which map $K \subset \partial \bar{M} = \partial_\infty M$ orientation-preservingly to itself. Obviously, if $\phi_*$ is not the identity, then $[\phi] \in \pi_0(\Diff^K(M))$ is nontrivial.

\begin{lemma} \label{th:pseudo-isotopy}
Suppose that $L$ is a homotopy sphere, with $n \geq 6$ even. Then, any $[\phi] \in \pi_0(\Diff^K(M))$ has finite order.
\end{lemma}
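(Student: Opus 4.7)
The plan is to fit $\pi_0(\Diff^K(M))$ into a tower of extensions, each controlled by a finite group, ultimately bounded by groups of exotic spheres.

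First, as noted after \eqref{eq:induced-map-2}, the action of $\pi_0(\Diff^K(M))$ on $H_n(\bar M, K) \iso \bZ^2$ lies in a finite subgroup of $GL_2(\bZ)$ (for a homotopy sphere of even dimension $\chi(L) = 2$, so the possible matrices form a group of order at most $2$). The restriction to $K$ gives a map into $\pi_0(\Diff^+(S^{n-1})) \iso \Theta_n$, finite by Kervaire--Milnor. Replacing $\phi$ by a sufficient power, I may assume both actions are trivial, and by isotopy extension I may further arrange that $\phi = \mathit{id}$ on $K$ and on a collar of $K$ in $\partial \bar M$.

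Next, I would isotope $\phi$ so that it fixes the fibre $\bar F$ and a tubular neighborhood pointwise. Since $\phi_\ast [\bar F] = [\bar F]$ and $\phi|K = \mathit{id}$, the disk $\phi(\bar F)$ is relatively homologous to $\bar F$ with the same boundary, inside a simply-connected $2n$-manifold with $2n \geq 12$. High-dimensional unknotting theorems (Haefliger--Zeeman) bound the isotopy classes rel boundary of such embedded disks by a finite set, so after another finite-order correction $\phi(\bar F) = \bar F$; uniqueness of tubular neighborhoods (modulo a finite-order normal-bundle rotation) then gives an isotopy making $\phi = \mathit{id}$ on a neighborhood of $\bar F$.

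Finally, the structure theorem for homotopy spheres from Section \ref{sec:spheres} gives $L \setminus \{q\} \iso \bR^n$ smoothly, so $\bar M$ minus an open neighborhood of $\bar F$ is the disk cotangent bundle of a closed $n$-disk, hence diffeomorphic to $D^n \times D^n \iso D^{2n}$. The restriction of $\phi$ to this complement is the identity on one of the two standard pieces of $\partial D^{2n}$ (the one arising from $\partial\,\mathrm{nbhd}(\bar F)$); after absorbing the action on the other piece, $S^{n-1} \times D^n \subset \partial \bar M$, by analogous finite-group estimates, $\phi$ represents a class in $\pi_0(\Diff(D^{2n}, \partial D^{2n})) \iso \Theta_{2n+1}$, which is finite. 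Hence a further power of $\phi$ is isotopic to the identity, and $[\phi]$ has finite order. The main obstacle is the second step: one must verify that the embedded disk $\phi(\bar F)$ is ambient-isotopic to $\bar F$ rel $K$ (using simple-connectedness and high codimension), and then promote the isotopy of submanifolds to one of $\phi$ itself with control on the normal framing, which requires careful bookkeeping of the Haefliger--Zeeman unknotting data and of the various finite-group ambiguities; the analogous finite-group estimate for the mapping class group of $S^{n-1} \times D^n$ rel boundary in step three is also a delicate point.
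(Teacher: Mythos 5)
Your first two reductions (killing the homology action using $n$ even, and killing the action on $K$ via $\pi_0(\Diff^+(S^{n-1}))\iso\Theta_n$) and the step fixing a neighbourhood of $\bar F$ match the paper's strategy in substance, with two caveats: the matrices \eqref{eq:induced-map-2} were derived for symplectomorphisms, so for a mere diffeomorphism you must argue as the paper does, using only the part of the pairing between $H_n(\bar M)$ and $H_n(\bar M,K)$ (the conclusion, order at most $2$, survives); and ``Haefliger--Zeeman'' unknotting in codimension $\geq 3$ is a PL statement --- smoothly it fails in general, and what you need is the metastable-range statement (true here since the codimension is $n$ and $4n>3n+3$; the paper instead quotes Budney's connectivity of the space of long knots, after first separating $\phi(\bar F)$ from a second fibre by the Whitney trick), together with the homotopy-rel-boundary hypothesis, which you correctly extract from $\phi_*=\mathrm{id}$ since $H_n(\bar M)\to H_n(\bar M,K)$ is injective by \eqref{eq:relative-homology}.

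The genuine gap is your endgame. Identifying the complement of $\nu(\bar F)$ with $D^n\times D^n\iso D^{2n}$ is fine, but the restriction of $\phi$ there is the identity only on the piece of $\partial D^{2n}$ coming from $\partial\nu(\bar F)$; on the other piece, the unit-sphere-bundle part $D^n\times S^{n-1}\subset\partial_\infty M$, it is an arbitrary diffeomorphism that is the identity on the corner $S^{n-1}\times S^{n-1}$. To reduce to $\pi_0(\Diff(D^{2n},\partial D^{2n}))\iso\Theta_{2n+1}$ you must first show that, after a further power, $\phi|D^n\times S^{n-1}$ is isotopic to the identity rel boundary, i.e.\ you need finiteness of $\pi_0$ of the group of diffeomorphisms of $D^n\times S^{n-1}$ rel boundary. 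This is exactly the point you wave at with ``analogous finite-group estimates,'' but it is not a quotable routine fact: it is a statement of essentially the same depth as the lemma itself, and the parity of $n$ must enter it --- for instance the twists $(x,y)\mapsto(x,\gamma(x)y)$ with $\gamma\in\pi_n(SO(n))$ give such diffeomorphisms, and $\pi_n(SO(n))$ is infinite for $n\equiv 3\ (\mathrm{mod}\ 4)$, so any proof of the needed finiteness has to invoke $n$ even (plus, in effect, Cerf's pseudo-isotopy theorem and finiteness of the relevant $\Theta$'s and homotopy groups). In other words, your last step quietly contains a sub-lemma as hard as the target. The paper's proof avoids this by a different endgame: after fixing a neighbourhood of $\bar F$, it also makes $\phi$ the identity near the zero-section $L$ (another long-knot argument, plus the finiteness of $\pi_n(SO(n))$ for even $n$ to handle the normal framing), so that the leftover is a pseudo-isotopy on $B^n\times S^{n-1}$, which Cerf's theorem kills outright. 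Unless you supply a proof (or reference) for the finiteness of the rel-boundary mapping class group of $D^n\times S^{n-1}$ for even $n$, the argument is incomplete.
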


\begin{proof}[Sketch of proof]
Since $\phi$ is not symplectic, $\phi_*$ does not a priori have to be compatible with all of \eqref{eq:explicit-pairing}, but it still preserves part of it, namely the pairing between $H_n(\bar{M})$ and $H_n(\bar{M}, K)$. This is enough to ensure that after possibly passing to the square, $\phi_*$ is the identity. Let's assume that that is the case.

Again after passing to an iterate, we may assume that $\phi|K$ is the identity, by looking at the topological analogue of \eqref{eq:map-to-theta-n}. Because $\phi_* = \mathit{id}$, the intersection number between $\phi(F)$ and another fibre $F'$ is zero. After applying the Whitney trick and a compactly supported isotopy, we may therefore assume that $\phi(\bar{F}) \cap \bar{F}' = \emptyset$. At this point, one can think of $\phi|\bar{F}$ as a ``higher-dimensional long knot'', which means an embedding $B^n \rightarrow B^n \times B^n$ whose restriction to the boundary is the standard map $S^{n-1} \rightarrow \{0\} \times S^{n-1}$. Since the space of such embeddings is connected \cite[Proposition 3.9(2)]{budney}, we may apply another compactly supported isotopy and achieve that $\phi|F$ is in fact the identity. Next, the action of $\phi$ on the normal bundle to $F$ is described by a map $B^n \rightarrow \mathit{GL}_+(n,\bR)$, which we can homotop to a constant. By extending this homotopy, one can arrange that $\phi$ is the identity in a neighbourhood of $\bar{F}$.

At this point, we can think of $\phi(L)$ as another ``long knot'', where this time $B^n \times B^n$ appears as the complement of a neighbourhood of $\bar{F}$. One can therefore deform $\phi$ (keeping it the same near $\bar{F}$) until it becomes equal to the identity on $L$. At this point, the action of $\phi$ on the normal bundle to $L$ is described by a map $(B^n,S^{n-1}) \rightarrow (\mathit{GL}_+(n,\bR),\mathit{id})$. But under our assumptions, $\pi_n(SO(n))$ is finite, hence after passing to a further iterate, one can assume that this map is nullhomotopic. The final outcome is that $\phi$ can be made equal to the identity in a neighbourhood of $L \cup \bar{F}$.

The remaining possibly nontrivial part of $\phi$ can be thought of as a pseudo-isotopy on $B^n \times S^{n-1}$, which is a simply-connected compact $(2n-1)$-manifold with boundary. The pseudo-isotopy theorem \cite{cerf70} (or rather, its mild generalisation to manifolds with boundary) applies, providing a deformation of the pseudo-isotopy to the identity.
\end{proof}

We now turn to symplectic topology. For technical simplicity, let's assume that $H^1(L) = 0$. Then one can introduce the notions of graded symplectic automorphism and graded Lagrangian submanifold of $M$, in an essentially unique way \cite{seidel99}. There are analogous notions of graded Legendrian submanifold, and graded contact diffeomorphism, in $\partial_\infty F$ (in fact, both can be defined via the symplectization, where they reduce to their symplectic geometry counterparts). An automorphism $\phi \in \Symp^K(M)$ has a canonical grading $\phi^\sharp$, which is characterized by the property that (for any grading $F^\sharp$ of the fibre), 
\begin{equation} \label{eq:sharp}
F^\sharp = \phi^\sharp(F^\sharp) \quad \text{outside a compact subset.}
\end{equation}

\begin{lemma} \label{th:hf-2}
Let $(K_t)$ be a loop in $\scrK(\partial_\infty M)$ based at $K$. Suppose that this does not lift to a loop in the space of graded Legendrian spheres. Then its image under \eqref{eq:boundary-map} has infinite order in $\pi_0(\Symp^K(M))$.
\end{lemma}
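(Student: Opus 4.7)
The plan is to construct a $\bZ$-graded invariant on $\pi_0(\Symp^K(M))$ via Lagrangian Floer cohomology of the cotangent fibre, and then show that for $\phi = \partial[(K_t)]$ this invariant shifts by $nm$ on $\phi^n$, where $m$ is the (non-zero) Maslov obstruction to lifting $(K_t)$ to graded Legendrians.

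For $\psi \in \Symp^K(M)$, the defining condition that $F$ and $\psi(F)$ agree at infinity allows one to define $HF^*(F, \psi(F))$ by compact-support Hamiltonian perturbation. Using the fixed grading $F^\sharp$ together with the canonical grading $\psi^\sharp(F^\sharp)$ (which agrees with $F^\sharp$ at infinity by \eqref{eq:sharp}), this is a $\bZ$-graded group. A smooth isotopy of $\psi$ inside $\Symp^K(M)$ gives a family of Lagrangian pairs with matching graded boundary throughout, so the graded isomorphism type depends only on $[\psi] \in \pi_0(\Symp^K(M))$. For $\psi = \mathrm{id}$ one has $HF^*(F^\sharp, F^\sharp) \iso H^*(\Omega_q L)$ (Viterbo, Abbondandolo--Schwarz, Abouzaid), which for $L$ closed connected orientable is concentrated in degrees $\geq 0$ with non-trivial degree-$0$ part. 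In particular, the position of the unit detects grading shifts.

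To evaluate the invariant on $\phi^n$, I would lift the $n$-fold concatenated loop $(K_t)^n$ to a path $\phi_t \in \Symp(M)$ from the identity to $\phi^n$. Along this path the Lagrangian family $F_t = \phi_t(F)$ has moving Legendrian boundary at infinity; Floer cohomology with moving boundary conditions is standard here, and its graded isomorphism type is invariant in $t$ provided we lift the grading on $F_t$ continuously from $F^\sharp$. At $t=1$ the continuous lift produces a graded Lagrangian whose underlying submanifold is $\phi^n(F) = F$, but whose grading at infinity is $F^\sharp[nm]$ rather than $F^\sharp$; comparing with the canonical grading $\phi^{n,\sharp}(F^\sharp) = F^\sharp$ at infinity shows that the two graded lifts of $\phi^n(F)$ differ by a shift of $nm$. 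Consequently
\begin{equation*}
HF^*(F^\sharp, \phi^{n,\sharp}(F^\sharp)) \iso HF^*(F^\sharp, F^\sharp)[-nm],
\end{equation*}
and since $m \neq 0$ the shifted graded groups are pairwise non-isomorphic for different $n$. This forces $\phi^n \not\htp \mathrm{id}$ in $\Symp^K(M)$ for every $n \neq 0$.

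The main obstacle is the final identification of the grading shift with $nm$: one must verify that the continuous lift of the Lagrangian grading along $F_t$ in $M$ records exactly the Maslov-type obstruction of the Legendrian loop $(K_t)$ at infinity. This reduces to checking that the natural $\bZ$-cover on Lagrangians in $M$ asymptotic to a fixed Legendrian in $\partial_\infty M$ maps equivariantly, under restriction to infinity, to the $\bZ$-cover on graded Legendrians. Once this compatibility is in place, moving-boundary invariance of Floer cohomology together with the computation of $HF^*(F^\sharp, F^\sharp)$ closes the argument.
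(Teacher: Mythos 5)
Your grading comparison is sound and is in fact the same mechanism the paper uses: the canonical grading $\phi^{n,\sharp}$, fixed by \eqref{eq:sharp}, differs from the grading obtained by continuous lifting along the extending isotopy by a shift equal to the Maslov obstruction of the loop (this is \eqref{eq:2-gradings} in the paper). The problem is where you try to convert that shift into a Floer-theoretic contradiction. You do it with the non-compact fibre $F$, and the step ``Floer cohomology with moving boundary conditions is standard here, and its graded isomorphism type is invariant in $t$'' is false in exactly the situation at hand: along the path $\phi_t$ the Legendrian boundary $\partial_\infty\phi_t(K)$ traverses a loop, and when it returns to $K$ intersection points are created at infinity, so $HF^*(F,\phi_t(F))$ is not invariant. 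Concretely, in the model case $L = S^n$ with the round metric and $\phi_1 = \tau^{-2}$ (the geodesic-flow loop, which also has nonzero Maslov obstruction), one has $HF^*(F,F) \iso H^*(\bR^n)$ of rank $1$ with any fixed perturbation convention at infinity, whereas $HF^*(F,\tau^{-2}(F))$ has rank $\geq 2$ (its generators include the two geodesic chords of length less than $2\pi$ between $q$ and a nearby point, cf.\ \eqref{eq:sigma-star}); so your claimed isomorphism $HF^*(F^\sharp,\phi^{n,\sharp}(F^\sharp)) \iso HF^*(F^\sharp,F^\sharp)[-nm]$ already fails there. If this invariance were available, the lemma would be nearly trivial; the whole subtlety of the statement lives in that step. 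Two smaller issues: $\phi^n(F)=F$ is not literally true (only agreement outside a compact set), and $HF^*(F^\sharp,F^\sharp)$ computed with compactly supported perturbations is not $H^*(\Omega_q L)$ --- that is the wrapped group, which requires a different, non-compactly-supported perturbation scheme; you are conflating the two conventions. Your closing paragraph identifies the compatibility of $\bZ$-covers at infinity as the main obstacle, but that part is soft; the real obstacle is the moving-boundary invariance you declared standard.

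The repair is essentially the paper's proof: detect the shift with a closed Lagrangian rather than with $F$. Since the loop of Legendrians lives at infinity, one may choose the extending isotopy $(\phi_t)$ to be constant on an arbitrarily large compact subset, in particular near the zero-section $L$; then $\phi_1^\sharp(L^\sharp) = L^\sharp$ on the nose, and the grading comparison gives $\phi^\sharp(L^\sharp) = L^\sharp[-k]$ with $k \neq 0$. Now only ordinary Floer cohomology of closed exact Lagrangians is needed: if $[\phi^n]$ were trivial in $\pi_0(\Symp^K(M))$, then $\mathit{HF}^*(L^\sharp,\phi^{n,\sharp}(L^\sharp))$ would be isomorphic both to $\mathit{HF}^{*-nk}(L^\sharp,L^\sharp)$ and to $\mathit{HF}^*(L^\sharp,L^\sharp) \iso H^*(L)$, which is impossible because $H^*(L)$ is nonzero, finite-dimensional and bounded, hence not isomorphic to any nonzero shift of itself. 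This avoids all non-compact Lagrangian Floer theory and the loop-space computation entirely.
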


\begin{proof}
Embed the Legendrian isotopy $(K_t)$ into one of contact diffeomorphisms of $\partial_\infty M$, denoted by $(\partial_\infty \phi_t)$, and then extend that to an isotopy $(\phi_t)$ in $\Symp(M)$. Then, the image of $(K_t)$ under \eqref{eq:boundary-map} is the map $\phi = \phi_1 \in \Symp^K(M)$.

The isotopy $(\phi_t)$ has a unique grading $(\phi_t^\sharp)$, such that $\phi_0^\sharp$ carries the trivial grading (is the identity in the graded symplectic automorphism group). We then have
\begin{equation}
\phi_1^\sharp(F^\sharp) = F^\sharp[k] \quad \text{outside a compact subset,} 
\end{equation}
where $[k]$ is the notation for $k$-fold downwards shift in the grading. The (even) integer $k$ is nonzero in our case, because otherwise we would get a graded lift of the loop $(K_t)$, contradicting the assumption. By comparing this with \eqref{eq:sharp}, one sees the preferred grading of $\phi \in \Symp^K(M)$ satisfies 
\begin{equation} \label{eq:2-gradings}
\phi^\sharp = \phi_1^\sharp[-k].
\end{equation}
One can choose $(\phi_t)$ to be the constant isotopy on an arbitrary large compact subset of $M$, in particular on a neighbourhood of $L$. Then $\phi_1^\sharp(L^\sharp) = L^\sharp$, which in combination with \eqref{eq:2-gradings} shows that $\phi^\sharp(L^\sharp) = L^\sharp[-k]$. Hence
\begin{equation}
\mathit{HF}^*(L^\sharp,\phi^\sharp(L^\sharp)) \iso \mathit{HF}^{*-k}(L^\sharp,L^\sharp).
\end{equation}
On the other hand, if $[\phi] \in \pi_0(\Symp^K(M))$ was trivial, $\phi^\sharp$ would be isotopic to the identity as a graded symplectic automorphism, which would imply that
\begin{equation}
\mathit{HF}^*(L^\sharp,\phi^\sharp(L^\sharp)) \iso \mathit{HF}^{*}(L^\sharp,L^\sharp).
\end{equation}
But $\mathit{HF}^*(L^\sharp,L^\sharp)$ is a nonzero finite-dimensional graded group (isomorphic to the ordinary cohomology of $L$), hence that is a contradiction. The same applies to the iterates of $\phi$.
\end{proof}

%
%

\section{The Dehn twist\label{sec:heuristic}}

At this point, it is helpful to recall the situation for the cotangent bundle of the standard sphere, so let's temporarily set $L = S^n$ and $M = T^*S^n$. The Dehn twist (or Picard-Lefschetz transformation) $\tau$ is a symplectic automorphism of the cotangent bundle $M$. It is compactly supported, hence in our notation belongs to $\Symp^{K,\mathit{fix}}(M)$ for any $q$.

The square $\tau^2$ is particularly easy to describe geometrically. Equip $L$ with the round metric. Use that metric to identify $M \iso TL$, and consider the Hamiltonian function
\begin{equation} \label{eq:geodesic-flow}
H(x) = h(\|x\|^2),
\end{equation}
where $\|x\|$ is the length of a tangent vector, and the function $h$ satisfies $h(r^2) = r$ for $r \gg 0$. At infinity, the Hamiltonian flow $(\phi_t)$ of $H$ is the normalized geodesic flow, moving each tangent vector by parallel transport along its unit speed geodesic. We then set 
\begin{equation} \label{eq:tau2}
\tau^{-2} = \phi_{2\pi}.
\end{equation}
To define $\tau^{-1}$ itself, one composes $\phi_\pi$ with the involution of $M$ induced by the antipodal map on $L$. 

Let's look at the topological aspect first. In terms of \eqref{eq:induced-map} or \eqref{eq:induced-map-2}, the Picard-Lefschetz formula says that 
\begin{equation} \label{eq:picard-lefschetz}
\tau_* = \begin{pmatrix} 1 & -1 \\ 0 & 1 \end{pmatrix} \text{ for odd $n$,} \qquad
\tau_* = \begin{pmatrix} -1 & -1 \\ 0 & 1 \end{pmatrix} \text{ for even $n$.}
\end{equation}
If $n$ is odd, $\tau_*$ has infinite order. For $n = 2,6$, it is known that $\tau^2$ is isotopic to the identity by a compactly supported isotopy (of diffeomorphisms). This is not true for other even $n$ (in those cases, $\tau^2$ is not even homotopic to the identity by a compactly supported homotopy \cite[Theorem 1.21]{avdek12}), but after passing to some power $\tau^{2k}$, one can again find a compactly supported isotopy to the identity \cite[Theorem 3]{kauffman-krylov05} (in fact, $k = 4$ suffices; see the discussion in \cite[Introduction]{klein11}, which summarizes the relevant parts of \cite{kauffman-krylov05, stevens86}).

From a symplectic geometry viewpoint, the situation is as follows. The canonical grading $\tau^\sharp$ has the property that \cite[Lemma 5.7]{seidel99}
\begin{equation} \label{eq:self-shift}
\tau^\sharp(L^\sharp) = L^\sharp[1-n].
\end{equation}
As in Lemma \ref{th:hf-2}, this implies that $[\tau] \in \pi_0(\Symp^K(M))$ has infinite order for all $n>1$ (leaving $n = 1$ as an elementary exercise). 

It is instructive to look a little closer at how $\tau$ acts on Lagrangian submanifolds. By \cite[Appendix]{seidel98b},
\begin{equation} \label{eq:connect}
\tau(F) \htp L \# F.
\end{equation}
Here $\#$ is Lagrangian connected sum (also called Lagrangian surgery \cite{polterovich91}); and $\htp$ stands for compactly supported Lagrangian isotopy. Since $\tau(F)$ intersects $L$ transversally and in a single point, the argument can be iterated, yielding in particular
\begin{equation} \label{eq:connect-2}
\tau^2(F) \htp ((-1)^{n-1} L) \# L \# F.
\end{equation}
The sign $(-1)^{n-1}$ is intended to keep track of orientations (which is not strictly necessary, since forming $\#$ does not require orientations, but is helpful for our argument): if $n$ is odd, the two copies of the zero-section $S^n$ contribute to \eqref{eq:connect-2} with the same orientation, while for even $n$ the orientations are opposite. This of course agrees with \eqref{eq:picard-lefschetz}.

Now suppose that $L$ is an exotic $n$-sphere, and let's engage in some speculative thinking. Both $F$ and $L \# F$ are diffeomorphic to $\bR^n$, but such a diffeomorphism is necessarily nontrivial at infinity. Hence, if one has some $\phi \in \Symp^K(M)$ for which the analogue of \eqref{eq:connect} holds, then $\phi|F$ must be nontrivial at infinity. Indeed, with suitable choices of orientations, it follows that its image under \eqref{eq:map-to-theta-n} is precisely the class $[L] \in \Theta_n \iso \pi_0(\Diff^+(S^{n-1}))$. Similarly, if one looks at $\phi^2$ and supposes that the analogue of \eqref{eq:connect-2} holds, then the image of $\phi$ under \eqref{eq:map-to-theta-n} must be 
$[(-1)^{n-1} L \# L]$. If $n$ is even, this would  contradict the previous suggestion, unless $[L] \in \Theta_n$ has order $2$. The conclusion is then: {\em for even $n$, one should not expect to obtain an analogue of $\tau$ in $\Symp^K(M)$, unless $[L] \in \Theta_n$ has order $2$; on the other hand, if there is an analogue of $\tau^2$, one can expect this to lie in $\Symp^{K,\mathit{fix}}(M)$.} Following the same line of argument, one arrives at the following: {\em for odd $n$, there is no fundamental obstruction to having an analogue of $\tau$ itself; however, one does not expect the analogue of $\tau^2$ to lie in $\Symp^{K,\mathit{fix}}(M)$, unless $[L] \in \Theta_n$ has order $2$.} These are purely heuristic considerations (so the words {\em should}  and {\em expect} are probably too strong), but they are useful as intuitive guideposts.

\section{Construction via geodesic flow\label{sec:first-def}}

Let $L$ be an exotic $n$-sphere. We write it as $L = S_f$ as in \eqref{eq:glue-2-discs}. Take $F_\pm \subset M$ to be the cotangent fibres at the centers $q_\pm = 0 \in B_\pm$, and $K_\pm \subset \partial_\infty M$ the corresponding spheres at infinity. Equip $L$ with a Riemannian metric as in Lemma \ref{th:metric}. If we identify $\partial_\infty M$ with the unit sphere tangent bundle for that metric, the associated Reeb flow equals the geodesic flow. Denoting that flow by $(\partial_\infty \phi_t)$, we find that 
\begin{equation}
(\partial_\infty \phi_{\pi}) (K_\pm) = K_\mp.
\end{equation}
More precisely, if we use the coordinates $B_\pm \subset \bR^n$ to identify $K_\pm = S^{n-1}$, then 
\begin{equation} \label{eq:phi22}
\begin{aligned}
& \partial_\infty \phi_\pi | K_- = a  f, \\
& \partial_\infty \phi_\pi | K_+ = a  f^{-1},
\end{aligned}
\end{equation}
where $a \in O(n) \subset \Diff(S^{n-1})$ is the antipodal map. In particular, $\partial_\infty \phi_{2\pi}$ maps $K_\pm$ to itself in an orientation-preserving way. Set $q = q_-$ (hence $F = F_-$ and $K = K_-$).

\begin{definition}
Define $[\rho] \in \pi_0(\Symp^K(M))$ to be the image of the homotopy class of the loop $(\partial_\infty \phi_t(K))_{0 \leq t \leq 2\pi}$ in $\scrK(\partial_\infty M)$ under \eqref{eq:boundary-map}.
\end{definition}

Taking $H \in \smooth(M,\bR)$ as in \eqref{eq:geodesic-flow} and its Hamiltonian flow $(\phi_t)$, one finds that the associated family of contactomorphisms at infinity is the previously considered $(\partial_\infty\phi_t)$. Hence, an explicit representative can be defined as in \eqref{eq:tau2}:
\begin{equation} \label{eq:phi4}
\rho = \phi_{2\pi}.
\end{equation}
This makes it clear that $\rho$ generalizes the inverse square Dehn twist $\tau^{-2}$. Let's draw some immediate conclusions from the definition:
\begin{itemize} \itemsep1em

\item 
Since the representation $L = S_f$ is not intrinsic, the class $[\rho]$ is unique only up to conjugation with elements of $\pi_0(\Diff^+(L)) \rightarrow \pi_0(\Symp(M))$. It is unknown at present whether these conjugations act nontrivially (note that a similar issue already arises for the standard Dehn twist).

\item
By \eqref{eq:phi22}, the image of $[\rho]$ under \eqref{eq:map-to-theta-n} is 
\begin{equation} \label{eq:afaf}
[af^{-1}af] \in \pi_0(\Diff^+(S^{n-1})).
\end{equation}
If $n$ is even, $a$ is isotopic to the identity, hence so is $af^{-1}af$, which means that one can find a representative of $[\rho]$ lying in $\Symp^{K,\mathit{fix}}(M)$. If $n$ is odd, then $af^{-1}af$ is isotopic to $f^2$ by Lemma \ref{th:conjugate-with-reflection}. Hence, a representative of $[\rho]$ in $\Symp^{K,\mathit{fix}}(M)$ exists if and only if $[L]$ is of order $2$ in $\Theta_n$. This is in line with the expectations from Section \ref{sec:heuristic}.

\item
One can assume that the function $h$ appearing in \eqref{eq:geodesic-flow} has the property that $k(r) = \frac{d}{dr} (h(r^2)) = h'(r^2) 2r$ itself has positive derivative $k'(r)>0$ for all $r>0$ such that $k(r) < 1$. In that case, if $\tilde{F} = T^*_{\tilde{q}}L$ is another cotangent fibre close to $F$, the intersection $\phi_{2\pi}(F) \cap \tilde{F}$ consists of two (transverse) points, corresponding to the two geodesics of length $<2\pi$ going from $q$ to $\tilde{q}$. The local sign of each such intersection point is determined by the Morse index of the geodesic mod $2$: it is $(-1)^{n(n+1)/2}$ if the Morse index is even, and $(-1)^{n(n+1)/2-1}$ otherwise. In our case, the Morse indices are $0$ and $n-1$, hence
\begin{equation} \label{eq:sigma-star}
\rho_*(F) \cdot \tilde{F} = \begin{cases} (-1)^{n(n+1)/2}\,2 & \text{$n$ odd}, \\
0 & \text{$n$ even.} \end{cases}
\end{equation}
This, together with the fact that $\rho|L$ is the identity, implies that the induced map $\rho_*$ on $H_n(\bar{M},K)$ agrees with the inverse square of \eqref{eq:picard-lefschetz}:
\begin{equation} \label{eq:rho-action}
\rho_* = \begin{pmatrix} 1 & 2 \\ 0 & 1 \end{pmatrix} \text{ for odd $n$,} \qquad
\rho_* = \begin{pmatrix} 1 & 0 \\ 0 & 1 \end{pmatrix} \text{ for even $n$.}
\end{equation}
In particular, if $n$ is odd then $[\rho] \in \pi_0(\Diff^K(M))$ has infinite order, just like for a standard Dehn twist. On the other hand, for even $n$, it follows from Lemma \ref{th:pseudo-isotopy} that the order of $[\rho] \in \pi_0(\Diff^K(M))$ must be finite.

\item
If we equip $F_\pm$ with the orientations induced by an orientation of $L$ itself, then the maps in \eqref{eq:phi22} preserve orientations if $n$ is odd, and reverse them if $n$ is even. If $n$ is odd, we can take any $\psi \in \Diff^+(L)$ which maps $x_+$ to $x_-$ and is isotopic to the identity, and consider
\begin{equation} \label{eq:pseudo-dehn}
(T^*\psi) \phi_{\pi} \in \Symp^K(M),
\end{equation}
which is an analogue of $\tau^{-1}$. If $n$ is even and $[L] \in \Theta_n$ has order $2$, there are orientation-reversing diffeomorphisms of $L$. We can then define \eqref{eq:pseudo-dehn} as before, but with an orientation-reversing $\psi$ (which is no longer unique up to isotopy). It is not clear in either case whether the square of \eqref{eq:pseudo-dehn} would be isotopic to the previously defined $\rho$; for that reason, we will not pursue these constructions further.
\end{itemize}

We now add gradings to our discussion. As in the proof of Lemma \ref{th:hf-2}, there is a unique lift $(\phi_t^\sharp)$ of $(\phi_t)$ to the graded symplectic automorphism group, such that $\phi_0^\sharp$ is the identity. With respect to the natural action of graded symplectic automorphisms on graded Lagrangian submanifolds, we then have for any grading of $F^\sharp$:

\begin{lemma} \label{th:shift}
Outside a compact subset, $\phi_{2\pi}^\sharp(F^\sharp)$ agrees with $F^\sharp[2-2n]$.
\end{lemma}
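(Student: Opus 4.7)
The plan is to compute the grading shift by a Maslov index calculation along a closed geodesic for the metric from Lemma \ref{th:metric}. Outside a compact subset, after the identification $M \iso TL$ induced by that metric, the Hamiltonian flow $\phi_t$ coincides with the normalised geodesic flow. Fix $\xi \in F$ of sufficiently large norm, so that $\phi_t(\xi)$ corresponds to the unit-speed geodesic starting at $q_-$ in the direction $\xi/\|\xi\|$. By Lemma \ref{th:metric} this geodesic passes through $q_+$ at time $\pi$ and returns to $q_-$ at time $2\pi$, so $\phi_{2\pi}(\xi) \in F$ and $d\phi_{2\pi}(T_\xi F) = T_{\phi_{2\pi}(\xi)} F$. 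The grading offset between $\phi_{2\pi}^\sharp(F^\sharp)$ and $F^\sharp$ near $\xi$ is then the Maslov index of the loop of Lagrangian subspaces $\Lambda_t = d\phi_t(T_\xi F) \subset T_{\phi_t(\xi)} M$, measured against the vertical reference.

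Via the classical correspondence between Maslov indices along the geodesic flow and Morse indices of geodesics, this Maslov integer equals the sum, with multiplicities, of the conjugate points to $q_-$ along the closed geodesic. Lemma \ref{th:metric} provides two such conjugate points, namely $q_+$ at time $\pi$ and $q_-$ at time $2\pi$. Each has multiplicity $n-1$: the Lemma supplies an $(n-1)$-dimensional family of distinct unit-speed geodesics from $q_-$ to $q_+$ of length $\pi$, parametrised by $q \in S^{n-1}$, and differentiating in $q$ gives $n-1$ linearly independent Jacobi fields vanishing at both endpoints; the symmetric argument on the return half handles the conjugate point at time $2\pi$. No interior conjugate points appear, since the radial segments inside $B_\pm$ are minimising geodesics for the locally spherical metric near the centres. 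The total Maslov index is therefore $2(n-1) = 2n-2$, equivalent to an upward grading shift by $2n-2$, which in the downward convention of the paper reads $F^\sharp[2-2n]$.

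The main obstacle is the careful bookkeeping of sign conventions, both in identifying the Maslov index of $(\Lambda_t)$ with the conjugate-point count, and in translating that integer into the grading shift of \cite{seidel99}. A useful cross-check is the specialisation to $L = S^n$ with its round metric: the same conjugate-point count yields $2n-2$, consistent with \eqref{eq:self-shift} iterated for $\tau^{-2}$ on the zero section together with the fact that the isotopy grading $\phi_{2\pi}^\sharp$ and the canonical grading of $\tau^{-2}$ must differ exactly by the shift at infinity that we are computing.
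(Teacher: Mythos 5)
Your proposal is correct and follows essentially the same route as the paper: compare $d\phi_t(T_\xi F)$ with the vertical distribution along the orbit, convert the resulting Maslov index into a conjugate-point count (the paper cites Duistermaat for this), find multiplicity $n-1$ at the relevant times, and conclude the shift $[2-2n]$. The only differences are bookkeeping details you already flag: the paper first reduces by the radial direction $Z$ (which lies in the intersection for all $t$, so the unreduced pair is never transverse) and then counts crossings at $t=0,\pi,2\pi$ with the Robbin--Salamon endpoint weight $\tfrac12$, which gives the same total $2n-2$ as your count of $t=\pi,2\pi$ at full multiplicity.
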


\begin{proof}
It is convenient to choose a particular way of thinking about gradings, namely that which starts by specifying a Lagrangian subbundle ${\mathcal F} \subset TM$ \cite[Example 2.10]{seidel99}. In our case, this will be the tangent bundle along the fibres of the projection $M \rightarrow L$. To determine the amount of shift of $\phi_{2\pi}^\sharp(F^\sharp)$ with respect to $F^\sharp$, one proceeds as follows. Take a point $x = (p,q) \in F$ sufficiently close to infinity, and its orbit $c(t) = \phi_t(x)$, $t \in [0,2\pi]$. We have Lagrangian subbundles $\Lambda_0,\Lambda_1 \subset c^*TM$, namely
\begin{equation}
\begin{aligned}
& \Lambda_{0,t} = D\phi_t(TF_x), \\
& \Lambda_{1,t} = {\mathcal F}_{c(t)}.
\end{aligned}
\end{equation}
These agree at the endpoints $t = 0,2\pi$. Take $I(\Lambda_0,\Lambda_1) \in \bZ$ to be the Maslov index for paths from \cite[Section 3]{robbin-salamon93}. Then, essentially by definition of $(\phi_t^\sharp)$, one has 
\begin{equation}
\phi_{2\pi}^\sharp(F^\sharp) = F^\sharp[-I(\Lambda_0,\Lambda_1)]
\end{equation}
locally near $x$. Everywhere along our path, the radial tangent direction $Z_{c(t)}$ is contained in $\Lambda_{0,t} \cap \Lambda_{1,t}$. We can consider the reduced spaces $\overline{TM} = \{X \in TM \;:\; \omega_M(X,Z) = 0\}/\bR Z$ at the points $c(t)$, and the corresponding Lagrangian subspaces $\bar\Lambda_{k,t} = \Lambda_{k,t}/\bR Z$. Then $I(\Lambda_0,\Lambda_1) = I(\bar\Lambda_0,\bar\Lambda_1)$. Next, by definition, $I(\bar\Lambda_0,\bar\Lambda_1)$ is a sum of contributions from points $t$ where $\bar\Lambda_{0,t} \cap \bar\Lambda_{1,t} \neq 0$, with endpoints weighted by $1/2$. These are precisely the conjugate points along the geodesic starting at $q$ and going in direction $p$ (with unit speed), and they contribute with their multiplicity \cite{duistermaat76}. In our case, because of the properties of the geodesic flow, $t = 0, \pi, 2\pi$ are the relevant conjugate points, with multiplicity $n-1$ each, giving $I(\bar\Lambda_0,\bar\Lambda_1) = 2n-2$.
\end{proof}

Lemma \ref{th:shift} says that if we lift $(\partial_\infty \phi_t(K))$ to a path of graded Legendrian submanifolds, then the two endpoints will have gradings which differ by $2n-2$. Lemma \ref{th:hf-2} then shows that $[\rho] \in \pi_0(\Symp^K(M))$ has infinite order, which proves Theorem \ref{th:main} for this construction.

\section{Weinstein handles\label{sec:weinstein}}

Let $M^{2n}$ be a symplectic manifold which is Liouville. By this we mean that $M$ carries a Liouville (symplectically expanding) vector field $Z$, whose flow exists for all times, and an exhausting function $h$ such that $Z.h>0$ outside a compact subset. In the special case where $h$ is Morse and $Z$ is gradient-like on all of $M$, we say that $M$ is Weinstein. The Weinstein structure gives rise to a description of $M$ by iterated attachment of Weinstein handles \cite{weinstein91} (a recent comprehensive reference is \cite{cieliebak-eliashberg}).

We will be interested in a very special class of Weinstein manifolds, namely those for which $h$ has only two critical points $x_0,x_1$, of index $0$ and $n$, respectively. In that case, the resulting description of $M$ has the following particularly simple form: start with the closed ball $B^{2n} \subset \bR^{2n}$, with its standard symplectic structure; attach a Weinstein handle along a Legendrian sphere $K \subset S^{2n-1} = \partial B^{2n}$; and then complete the result by adding an infinite cone along the boundary, which recovers $M$ up to symplectic isomorphism. More precisely, what one needs for the handle attachment process is a Legendrian sphere $K$ together with a {\em parametrization}, which means an element
\begin{equation} \label{eq:para-k}
f \in \mathit{Diff}(K,S^{n-1})/O(n).
\end{equation}
One can describe the handle data more explicitly in terms of the flow of $Z$. Namely, let $\scrM(x_0,x_1)$ be the space of (unparametrized) flow lines of $Z$ with asymptotics $x_0,x_1$. For technical simplicity, suppose that there are Darboux coordinates around $x_0$ in which $Z = \myhalf p \partial_p + \myhalf q \partial_q$, and similarly Darboux coordinates around $x_1$ in which $Z = - \myhalf p \partial_p + \frac{3}{2} q \partial_q$. Then, by intersecting flow lines with small spheres in those coordinates, one obtains two maps
\begin{equation} \label{eq:asymptotic-maps}
\epsilon_0,\, \epsilon_1: \scrM(x_0,x_1) \longrightarrow S^{2n-1}.
\end{equation}
Each is an embedding, whose image is a Legendrian sphere. However, while the image of $\epsilon_1$ is always the same, namely $S^{n-1} = \{q = 0\} \subset S^{2n-1}$, that of $\epsilon_0$ is the sphere $K$ used in the handle attachment process. The parametrization \eqref{eq:para-k} is given (up to isotopy within that group) by 
\begin{equation} \label{eq:f}
f = \epsilon_1 \epsilon_0^{-1}. 
\end{equation}

\begin{lemma} \label{th:weinstein}
Let $L$ be a homotopy $n$-sphere. Then $T^*L$ is an instance of the previously described handle attachment process, where $K = S^{n-1} \subset S^{2n-1}$ is the standard Legendrian sphere, and the parametrization \eqref{eq:para-k} is the preimage of $L$ under \eqref{eq:diff-to-theta}.
\qed
\end{lemma}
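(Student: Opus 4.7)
The plan is to put an explicit Weinstein structure on $M = T^*L$ with exactly two critical points, compute the moduli space of Liouville flow lines between them, and read off the handle data from the local normal forms \eqref{eq:asymptotic-maps} at those critical points.

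\textbf{Weinstein structure.} Write $L = S_f = B_- \cup_f B_+$ and equip it with the Weinstein metric of Lemma \ref{th:metric}. Choose a Morse function $g : L \to \bR$ with a single minimum at $q_-$ and a single maximum at $q_+$ whose unit-speed gradient flow lines are precisely the radial-radial geodesics of that lemma. On $M$ take the Lyapunov function $h = \myhalf\|p\|^2 + g(q)$ and a Liouville vector field $Z$ obtained by perturbing $p\,\partial_p$ so that its restriction to the zero section is gradient-like for $g$; then $Z$ is gradient-like for $h$ away from its critical points $x_0 = (q_-, 0)$ (Morse index $0$) and $x_1 = (q_+, 0)$ (Morse index $n$). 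After a further local modification we may assume $Z = \myhalf(p\,\partial_p + q\,\partial_q)$ near $x_0$ and $Z = -\myhalf p\,\partial_p + \tfrac{3}{2} q\,\partial_q$ near $x_1$, matching \eqref{eq:asymptotic-maps}. At $x_0$ the Darboux chart can be chosen to coincide with the ambient base/fibre coordinates of $T^*L$; at $x_1$ the Darboux $p$-axis (the stable direction) identifies with the tangent space to the zero section at $q_+$ (since $W^s(x_1)$ equals the zero section), while the Darboux $q$-axis identifies with the cotangent fibre $W^u(x_1) = F_+$.

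\textbf{Flow lines and handle data.} Since $W^u(x_0) = T^*L \setminus F_+$ and $W^s(x_1) = (L\setminus\{q_-\})\times\{0\}$, their intersection is the zero section with $\{q_\pm\}$ removed, and $\scrM(x_0, x_1)$ is simply the space of $g$-gradient lines from $q_-$ to $q_+$. By Lemma \ref{th:metric}, each such line leaves $q_-$ in a unique direction $v \in S^{n-1} = \partial B_-$ and enters $q_+$ from the direction $f(v) \in S^{n-1} = \partial B_+$ dictated by the gluing $L = S_f$. Near $x_0$ the flow line labelled by $v$ lies along the zero section in direction $v$, so $\epsilon_0$ sends it to $(0, v) \in \{p = 0\}\cap S^{2n-1}$; in particular the attaching sphere $\epsilon_0(\scrM(x_0,x_1)) = \{p = 0\}\cap S^{2n-1}$ is the standard Legendrian $S^{n-1}$. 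Near $x_1$ the same flow line lies along the Darboux $p$-axis (the zero section) and crosses the Darboux sphere at the point corresponding to $f(v) \in S^{n-1}$. Hence the parametrisation \eqref{eq:f} is represented by $f$ modulo the residual $O(n)$-ambiguity in the choice of Darboux charts, which is precisely the class mapping to $[L]$ under \eqref{eq:diff-to-theta}.

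\textbf{Main obstacle.} The principal bookkeeping task is to keep the various copies of $S^{n-1}$ in view --- the Darboux equatorial spheres at $x_0$ and $x_1$, the moduli $\scrM(x_0, x_1)$, and the gluing sphere $\partial B_\pm$ of $L = S_f$ --- consistent enough that the composite $\epsilon_1 \epsilon_0^{-1}$ genuinely represents $f$ rather than some other element of its $O(n)$-orbit (or, worse, its inverse, which would give $[-L]$). A secondary technical point is constructing the global Liouville vector field $Z$ so that it is simultaneously gradient-like for $h$, in the prescribed normal forms at $x_0$ and $x_1$, and has the standard contact behaviour at infinity; this is routine but needs some care since the most natural choice $p\,\partial_p$ vanishes on the zero section and so is not gradient-like there.
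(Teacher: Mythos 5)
Your proposal is correct and follows essentially the same strategy that the paper sketches for Lemma \ref{th:weinstein}: a two-critical-point Morse function on $L$ inducing a Weinstein structure on $T^*L$ for which all flow lines in $\scrM(x_0,x_1)$ lie in the zero-section and are gradient lines of the original function, so that the asymptotic maps $\epsilon_0,\epsilon_1$ recover the presentation $L = S_f$ and hence the parametrization $[f]$. Your additional use of the metric from Lemma \ref{th:metric} is just a convenient (and valid) way of making the gluing map literally $f$, and the technical points you flag (normal forms at $x_0,x_1$, a globally gradient-like Liouville field of contact type at infinity) are left at the same sketch level in the paper itself.
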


The proof is elementary, and we will only describe the strategy. Take a Morse function on $L$ which has only two critical points, a minimum $x_0$ and a maximum $x_1$. Starting from that, one constructs a Morse function on $T^*L$, and a Liouville vector field which is gradient-like for that function. This is done so that all flow lines on $\scrM(x_0,x_1)$ lie inside the zero-section $L$, and will be gradient flow lines of the original Morse function. Then, the construction of the parametrization from \eqref{eq:asymptotic-maps} recovers the description of $L$ as in \eqref{eq:glue-2-discs}.

\begin{remark} \label{th:cotangent-fibre}
Weinstein manifolds $M$ in our class carry a specific non-compact Lagrangian submanifold, namely the unstable manifold of $x_1$ under the flow of $Z$. From the argument for Lemma \ref{th:weinstein} it follows that in the case $M = T^*L$, this can be identified with a cotangent fibre.
\end{remark}


\section{Lefschetz fibrations}

Let $\pi: E \rightarrow D$ be an exact symplectic Lefschetz fibration over the closed unit disc $D \subset \bC$ (the definition of Lefschetz fibration used here is as in \cite[Section 15]{seidel04}, except that we require the smooth fibres to be Liouville domains). We pick the base point $\ast = 1 \in \partial D$, and denote the fibre over that point by $P$. As mentioned before, this is a Liouville domain, while the total space $E$ itself is an exact symplectic manifold with corners.

Recall that a vanishing path is an embedded path in $D$ starting from one of the critical values, ending at $\ast$, and which otherwise avoids all critical values. To each such path belongs a Lefschetz thimble, which is an embedded Lagrangian disc $\Delta \subset E$ fibered over that path, and its vanishing cycle, which is the Lagrangian sphere $V = \partial \Delta \subset P$. Because of the way in which it arises as a boundary, the vanishing cycle comes with a parametrization in the sense of \eqref{eq:para-k}. To be precise, the parametrization is unique up to deformation in $\mathit{Diff}(V,S^{n-1})/O(n)$. 

We will often work with a fixed basis of vanishing paths $(\gamma_0,\dots,\gamma_r)$ (called a {\em distinguished basis} in the Picard-Lefschetz theory literature, see for instance \cite[Section 16d]{seidel04}). This determines associated bases of Lefschetz thimbles $(\Delta_0,\dots,\Delta_r)$, and of vanishing cycles $(V_0,\dots,V_r)$ with their parametrizations $(f_0,\dots,f_r)$. The classes of $(\Delta_0,\dots,\Delta_r)$, for some choice of orientations, form a basis of
\begin{equation} \label{eq:h-basis-by-thimbles}
H_n(E,P) \iso \bZ^{r+1}. 
\end{equation}
From $P$ and a basis of Lefschetz thimbles, one can reconstruct the symplectic Lefschetz fibration up to a suitable notion of deformation \cite[Lemma 16.9]{seidel04}.

One can turn the total space into a Liouville manifold, as follows. The given exact symplectic form $\omega_E = d\theta_E$ comes with a Liouville vector field, which points inwards along the fibrewise boundary (called the horizontal boundary $\partial_hE$ in \cite{seidel04}). By adding a multiple of the pullback of an exact symplectic form on the base $D$, one can achieve that the Liouville vector field for the modified symplectic form also points inwards along the vertical boundary $\partial_vE = \pi^{-1}(\partial D)$. At this point, rounding the corners yields a Liouville domain, to whose boundary one can then attach a semi-infinite cone. Denote the resulting Liouville manifold by $M$. This is specified uniquely (up to isomorphism of Liouville manifolds) by the same data as before ($P$ and a basis of Lefschetz thimbles).

\begin{lemma} \label{th:2-handle}
Take a Lefschetz fibration whose fibre $P$ is the ball cotangent bundle $B^*S^{n-1}$, and which has two vanishing cycles $(V_0,V_1)$, which are both copies of the zero-section $S^{n-1}$ but with arbitrary parametrizations $([f_0],[f_1])$. Then, the associated Liouville manifold $M$ can also be obtained by taking the standard ball $B^{2n}$ and attaching a handle along the standard Legendrian sphere $S^{n-1} \subset S^{2n-1}$, with parametrization $f_1f_0^{-1}$.
\end{lemma}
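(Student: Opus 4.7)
The strategy is to put a Weinstein structure on $M$ with four critical points coming from the Lefschetz fibration data, then apply a Weinstein handle cancellation using $V_0$ to arrive at the two-critical-point model of Section \ref{sec:weinstein}, tracking how the parametrizations compose through the process.

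First I would set up the Weinstein structure on $M$. The fibre $P = B^*S^{n-1}$ has a standard Weinstein handle decomposition with two critical points $y_0, y_1$ of indices $0$ and $n-1$, whose union of unstable manifolds is the zero section $S^{n-1}$. Extending this across the base, the Liouville manifold $M$ receives a Weinstein structure with two further critical points $z_0, z_1$ of index $n$, one per critical value of $\pi$; the handle at $z_i$ is attached along the Legendrian realisation of $V_i$, with parametrization given by $f_i$. The general framework for such Weinstein structures on Lefschetz fibrations is treated in \cite{cieliebak-eliashberg}.

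Next I would invoke Weinstein handle cancellation. By construction $V_0$ is the zero section of $P$, which contains the core of the $y_1$-handle; after pushing it off into the boundary of the Weinstein subdomain built from the $y_0$- and $y_1$-handles via the Liouville flow, one obtains a Legendrian $S^{n-1}$ meeting the belt sphere of the $y_1$-handle transversally in one point. The Weinstein cancellation lemma then simultaneously removes $y_1$ and $z_0$, so the resulting Weinstein structure on $M$ has only the two critical points $y_0$ (index $0$) and $z_1$ (index $n$). This realises $M$ as $B^{2n}$ with one $n$-handle attached along a Legendrian sphere $K \subset S^{2n-1}$, matching the setup of Section \ref{sec:weinstein}.

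It then remains to identify the parametrization of $K$ via \eqref{eq:f} as $f_1 f_0^{-1}$. After cancellation, every flow line in $\scrM(y_0,z_1)$ arises from smoothing a broken trajectory of the original Weinstein structure which goes from $y_0$ up to $V_0$ inside $P$, then across the (former) $V_0$-thimble to $V_1$, then from $V_1$ into $z_1$ inside the $V_1$-handle. The asymptotic map $\epsilon_0$ near $y_0$ is thus governed by the identification of $V_0$ with $S^{n-1}$, i.e.\ by $f_0$; and $\epsilon_1$ near $z_1$ is governed by the parametrization of $V_1$, i.e.\ by $f_1$. Composition yields $\epsilon_1 \epsilon_0^{-1} = f_1 f_0^{-1}$ in $\mathit{Diff}(S^{n-1})/O(n)$. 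The principal obstacle is this last bookkeeping step; I would probably reduce first to the normalised case $f_0 = \mathit{id}$ by applying to $P$ the Weinstein automorphism induced by an extension of $f_0^{-1}$, and then verify the resulting statement by a purely local calculation in the standard model of a Weinstein handle cancellation.
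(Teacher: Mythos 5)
Your proposal is correct and follows essentially the same route as the paper: translate the Lefschetz data into a Weinstein handle decomposition (fibre handles of index $0$ and $n-1$, plus an index-$n$ handle for each vanishing cycle), cancel the index-$(n-1)$ handle against the handle attached along $V_0$, and normalise $(f_0,f_1)\mapsto(\mathit{id},f_1f_0^{-1})$ by re-identifying the fibre to track the parametrization. The paper's (also only sketched) argument does exactly this, citing the Bourgeois--Ekholm--Eliashberg translation and the Weinstein cancellation lemma of Cieliebak--Eliashberg, with the subcritical piece $D\times B^*S^{n-1}$ playing the role of your $y_0$- and $y_1$-handles.
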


This is the result of applying a general translation process \cite[Section 8]{bourgeois-ekholm-eliashberg09}, which interprets the way in which $M$ is built from vanishing cycle data as a special case of Weinstein handle attachment. We omit the details, and only give an outline of the argument. As a preliminary observation, note that passing from $(f_0,f_1)$ to $(f_0g,f_1g)$ for any $g \in \Diff(S^{n-1})$ does not change $M$; it merely changes the way in which one thinks of the fibre $P$ as being identified with $B^*S^{n-1}$. Hence, we may assume without loss of generality that $(f_0,f_1) = (\mathit{id},f)$. Applying the strategy from \cite{bourgeois-ekholm-eliashberg09} directly yields a slightly more complicated Weinstein handle decomposition, which is as follows. Start with $D \times B^*S^{n-1}$, and round off its corners to obtain a Liouville domain $U$. Then, one obtains $M$ by attaching handles along the Legendrian spheres $K_0 = \{z_0\} \times S^{n-1}$, $K_1 = \{z_1\} \times S^{n-1}$ in $\partial U$ (and then adding an infinite cone to the boundary). Here, $z_0 \neq z_1$ are points on $S^1 = \partial D$; both $S^{n-1}$ are the zero-section; and one additionally equips $K_0,K_1$ with the parametrizations $(f_0,f_1) = (\mathit{id},f)$. A simple handle cancellation argument (see \cite[Proposition 12.22]{cieliebak-eliashberg} for the general notion) shows that attaching a handle along $K_0$ results in a Liouville domain which is deformation equivalent to $B^{2n}$. The deformation happens in such a way that $K_1$ becomes the standard Legendrian sphere $S^{n-1} \subset \partial B^{2n}$, still carrying the parametrization $f$.

\begin{lemma} \label{th:tstarl}
In the situation of Lemma \ref{th:2-handle}, $M$ is symplectically isomorphic to the cotangent bundle $T^*L$ of the homotopy $n$-sphere associated to $f_1f_0^{-1} \in \mathit{Diff}(S^{n-1})$, in the sense of \eqref{eq:glue-2-discs} (note that since $f_0$, $f_1$ are a priori only determined up to left composition with $O(n)$, one can always choose representatives such that $f_1f_0^{-1}$ is orientation-preserving).
\end{lemma}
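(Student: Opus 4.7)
The plan is to combine the two preceding lemmas directly: Lemma \ref{th:2-handle} computes the Weinstein handle data associated to the Lefschetz fibration, and Lemma \ref{th:weinstein} identifies the same handle data with $T^*L$, so the statement reduces to the fact that a Liouville manifold obtained by attaching a single index-$n$ Weinstein handle to $B^{2n}$ is determined up to symplectic isomorphism by the attaching Legendrian together with its parametrization.

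First I would fix the $O(n)$-ambiguity in $(f_0,f_1)$ coming from the definition of parametrization in \eqref{eq:para-k}. Choosing representatives of $[f_0],[f_1] \in \mathit{Diff}(V,S^{n-1})/O(n)$, we can left-multiply one of them by a suitable element of $O(n)$ to ensure that $f := f_1 f_0^{-1}$ is orientation-preserving; this normalization does not affect the coset in \eqref{eq:para-k} that Lemma \ref{th:2-handle} actually uses.

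Next, feed $f \in \mathit{Diff}^+(S^{n-1})$ into \eqref{eq:glue-2-discs} to obtain a homotopy sphere $L = S_f$. By Lemma \ref{th:weinstein}, the cotangent bundle $T^*L$ admits a Weinstein presentation in which one attaches a handle to the standard ball $B^{2n}$ along the standard Legendrian sphere $S^{n-1} \subset S^{2n-1}$, with parametrization equal to (the class of) $f$, this class being exactly the preimage of $[L]$ under \eqref{eq:diff-to-theta}. On the other hand, Lemma \ref{th:2-handle} asserts that our Liouville manifold $M$ is obtained by exactly the same procedure: standard ball, standard Legendrian sphere, and parametrization $f_1 f_0^{-1} = f$. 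Since the Weinstein handle construction produces a Liouville manifold depending—up to symplectic isomorphism—only on the parametrized Legendrian attaching sphere (this is the standard uniqueness statement for Weinstein handles; see \cite{cieliebak-eliashberg}), we conclude $M \iso T^*L$.

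The only non-formal ingredient is this uniqueness of the handle attachment, which is what makes Lemma \ref{th:weinstein} and Lemma \ref{th:2-handle} talk about the same object. The main place where one has to be careful is bookkeeping: checking that the convention by which Lemma \ref{th:weinstein} associates a parametrization to $L$ via \eqref{eq:asymptotic-maps}–\eqref{eq:f} matches the convention by which Lemma \ref{th:2-handle} produces $f_1 f_0^{-1}$ from the vanishing-cycle data, with consistent orientations. Once the conventions from Sections \ref{sec:spheres} and \ref{sec:weinstein} are lined up, the argument is essentially a one-line assembly, and I would not expect any additional analytic content beyond what is already packaged into the two lemmas invoked.
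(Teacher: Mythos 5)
Your proposal matches the paper's own argument: the paper simply states that the lemma "is immediate by combining Lemmas \ref{th:weinstein} and \ref{th:2-handle}", which is exactly the assembly you describe, with the $O(n)$ normalization handled as in the parenthetical remark of the statement. Your additional care about matching the parametrization conventions is reasonable but does not change the route.
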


This is immediate by combining Lemmas \ref{th:weinstein} and \ref{th:2-handle}. Alternatively, one can view it as an instance of a more general relation between real and complex Morse theory, explored in \cite{johns08, johns09b}. However, while it is highly plausible that the total spaces of the complexified Morse functions constructed in those papers are cotangent bundles, that fact has not actually been proved \cite[Remark 1.3]{johns09b}. This is the reason why we have adopted an approach which is less direct, but uses only standard tools.

From an exact Lagrangian submanifold $V \subset P \setminus \partial P$, one can construct a Legendrian submanifold of $\partial_\infty M$. This is obvious in the case where
\begin{equation} \label{eq:theta-vanishes}
\theta_P|V = 0,
\end{equation}
since then one can just round off the corners of $E$ as before, and $V$ becomes Legendrian in the resulting Liouville domain. In the general case, one has to first deform the one-form on $E$ by an exact amount to achieve \eqref{eq:theta-vanishes}, and then compensate that by another deformation at the end of the process (using Gray's theorem). As suggested by the terminology, the main application is to vanishing cycles $V = \partial \Delta$. In that case, we actually get a Lagrangian submanifold $\bR^n \iso F \subset M$ which, outside a compact subset, is modelled on our Legendrian.

\begin{lemma} \label{th:tstarl2}
In the situation of Lemma \ref{th:tstarl}, consider the vanishing cycle $V = V_1$ and its Lefschetz thimble $\Delta = \Delta_1$. Then, the associated Lagrangian submanifold $F \subset M$ can be identified with a cotangent fibre under $M \iso T^*L$.
\end{lemma}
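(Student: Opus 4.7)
The plan is to identify $\Delta_1$, and its non-compact extension $F$, with the cocore of the single Weinstein handle appearing in the decomposition of $M$ from Lemma \ref{th:2-handle}, and then apply Remark \ref{th:cotangent-fibre} to conclude that this cocore becomes a cotangent fibre under $M \iso T^*L$.

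First I would track $\Delta_1$ through the intermediate Weinstein description in the proof of Lemma \ref{th:2-handle}. Starting from the Liouville domain $U$, obtained from $D \times B^*S^{n-1}$ by rounding corners, $E$ arises by attaching Weinstein handles along $K_0$ and $K_1$; the two Lefschetz critical points of $\pi$ become the index-$n$ critical points of the gradient-like Liouville vector field, sitting at the summits of these two handles. Under the standard dictionary between Lefschetz and Weinstein structures (the construction underlying \cite[Section 8]{bourgeois-ekholm-eliashberg09}), the Lefschetz thimble $\Delta_j$ is precisely the closure of the unstable manifold of the critical point at the summit of the $K_j$-handle. The handle-cancellation step from that proof removes the $K_0$-handle together with the cancelling critical points inside $U$, leaving a Weinstein domain deformation equivalent to $B^{2n}$ with a single handle attached along the standard Legendrian $S^{n-1}\subset S^{2n-1}$, carrying parametrization $f_1 f_0^{-1}$. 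This cancellation takes place in a region disjoint from a neighbourhood of the $K_1$-handle and its cocore, so $\Delta_1$ is carried through the deformation to the unstable manifold of the remaining index-$n$ critical point.

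Once $\Delta_1$ has been identified with this cocore, Lemma \ref{th:weinstein} identifies the resulting handle decomposition with $T^*L$ for the homotopy sphere $L$ associated to $f_1 f_0^{-1}$, and Remark \ref{th:cotangent-fibre} tells us that the cocore of the unique index-$n$ critical point corresponds under this identification to a cotangent fibre $B^*_qL$. Finally, passing from the Lefschetz thimble $\Delta_1$ to its non-compact extension $F \subset M$ corresponds, on the Weinstein side, to extending the cocore outwards along the Liouville flow after attaching the infinite cone; under $M \iso T^*L$ this extends $B^*_qL$ to the full cotangent fibre $T^*_qL$, as desired.

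The main obstacle is the bookkeeping in the cancellation step: one must verify that the unstable manifold (hence the thimble) of the $K_1$-critical point deforms continuously, and in a Lagrangian fashion, throughout the Weinstein homotopy that collapses the $U$-plus-$K_0$-handle to $B^{2n}$. This is a direct but slightly tedious application of the parametric cancellation lemmas for Weinstein structures (cf.\ \cite[Proposition 12.22]{cieliebak-eliashberg}), with the additional task of keeping a distinguished Lagrangian disc rigid under the deformation. Once this is set up, the remainder is a formal composition of Lemma \ref{th:weinstein}, Lemma \ref{th:2-handle}, and Remark \ref{th:cotangent-fibre}.
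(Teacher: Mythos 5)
Your overall strategy is the same as the paper's (track the thimble through the handle presentation of Lemma \ref{th:2-handle}, then quote Remark \ref{th:cotangent-fibre}), but the pivotal identification is stated incorrectly, and the misstatement conceals exactly the point that has to be proved. In the translation of \cite{bourgeois-ekholm-eliashberg09}, the attaching Legendrian $K_1$ is the Legendrian lift of the vanishing cycle $V_1$; since a handle's core is by definition the (stable) Lagrangian disc bounded by the attaching sphere, the Lefschetz thimble $\Delta_1$ --- whose boundary is a lift of $V_1$ over the base point, i.e.\ a parallel copy of $K_1$ --- is, up to a Lagrangian collar inside the subdomain $U$, the \emph{core} of the $K_1$-handle, not the closure of the unstable manifold of its critical point. (This is also what \eqref{eq:h-basis-by-thimbles} reflects: the thimble classes correspond to the cores.) The cocore (unstable disc, in the convention of Remark \ref{th:cotangent-fibre}, where the unstable manifold of $x_1$ is the cotangent fibre) meets $\Delta_1$ transversally at the critical point, and its boundary is the belt sphere, not a lift of $V_1$. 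So your assertion that $\Delta_1$ is ``precisely the closure of the unstable manifold'' is false as stated; and if one instead reads ``unstable'' with the opposite sign convention, then the identification of $\Delta_1$ becomes harmless but Remark \ref{th:cotangent-fibre} hands you the wrong disc (a piece of the zero-section rather than a fibre), so either way the argument as written does not close.

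What is true, and is really the content of the lemma, is an isotopy statement: the completed Lagrangian $F$ (core plus collar plus cone) is Lagrangian isotopic in $M$ to the completed cocore of the remaining handle. One way to see this is to check that $\partial_\infty F$, the Legendrian lift of $V_1$ over the base point, becomes isotopic to the belt sphere after the surgery along $K_1$ (a pushoff of the attaching sphere is carried across the handle by the Liouville flow), and then to isotope the fillings accordingly; alternatively, one follows $F$ directly through the cancellation deformation in the proof of Lemma \ref{th:2-handle}, which is what the paper's ``inspection'' refers to. Your proposal assumes this identification as part of a ``standard dictionary'' rather than proving it, and since the remaining steps (the cancellation being supported away from the $K_1$-handle, then Remark \ref{th:cotangent-fibre}) are routine, the omitted step is essentially the whole lemma. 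A smaller point: the cancellation is not literally disjoint from $F$, since the collar of $F$ runs through $U$; so even after the core/cocore issue is repaired, one still needs the Moser-type argument you allude to in order to carry $F$ along the Weinstein homotopy.
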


This follows from Remark \ref{th:cotangent-fibre} and inspection of the argument for Lemma \ref{th:2-handle}.

\section{Monodromy}

Let's return to general exact Lefschetz fibrations. Take a vector field on the base $D$ which equals the (anticlockwise) rotational vector field along $\partial D$, and which vanishes in a neighbourhood of the critical values. One can lift the flow $(b_t)$ of that vector field via parallel transport to a family of fibrewise symplectic automorphisms $(\beta_t)$ of $E$. By construction, $\beta_{2\pi}$ maps $P$ to itself. We note two properties:
\begin{itemize} \itemsep1em
\item
$\beta_{2\pi}|P$ is the global monodromy of the Lefschetz fibration. In terms of a basis of vanishing cycles, the Picard-Lefschetz theorem then says that
\begin{equation} \label{eq:total-monodromy}
\beta_{2\pi}|P \htp \tau_{V_0} \cdots \tau_{V_r},
\end{equation}
where $\htp$ is an isotopy in $\Symp(P,\partial P)$ (in fact it is Hamiltonian rel $\partial P$, meaning that it is induced by a time-dependent Hamiltonian function which vanishes on  $\partial P$). Also, this isotopy is itself essentially canonical, meaning unique up to deformation rel endpoints. 

\item
With respect to \eqref{eq:h-basis-by-thimbles}, the induced map on $H_*(E,P)$ is given by
\begin{equation} \label{eq:monodromy-formula}
(\beta_{2\pi})_* = (-1)^n (A^t)^{-1}A. 
\end{equation}
Here, $A$ is a lower-triangular matrix with entries determined by the intersection numbers of the vanishing cycles, as follows:
\begin{equation}
A_{ij} = \begin{cases} (-1)^{(n+1)n/2} V_i \cdot V_j & i > j, \\
1 & i = j, \\
0 & i < j.
\end{cases}
\end{equation}
This is a form of the classical monodromy formula (\cite[vol. 2, Theorem 2.6]{arnold-gusein-zade-varchenko} or \cite[Hauptsatz]{lamotke75}).
\end{itemize}

\begin{assumption} \label{th:return-map}
Fix a vanishing cycle $V$, with the following property: there is an isotopy of oriented exact Lagrangian spheres in $P$,
\begin{equation} \label{eq:return-map}
\tau_{V_0} \cdots \tau_{V_r}(V) \htp V.
\end{equation}
\end{assumption}

From such an isotopy, one constructs a symplectic isomorphism $\phi \in \Symp^K(M)$, where $M$ is the Liouville manifold obtained from the total space $E$, and $K \subset \partial_\infty M$ the Legendrian submanifold associated to $V$. The construction is simple, and can be thought of as a parametrized version of that of $K$: as a consequence of \eqref{eq:total-monodromy},
\begin{equation} \label{eq:total-monodromy-2}
\beta_{2\pi}(V) \htp \tau_{V_0} \cdots \tau_{V_r}(V).
\end{equation}
By combining $(\beta_t(V))_{0 \leq t \leq 2\pi}$ with \eqref{eq:total-monodromy-2} and \eqref{eq:return-map}, one gets a loop of oriented exact Lagrangian submanifolds, each lying in a boundary fibre of the Lefschetz fibration. One turns these into a loop of oriented Legendrian submanifolds of $\partial_\infty M$ (starting and ending at $K$); then embeds that loop into a contact isotopy; and finally extends that to a symplectic isotopy of $M$. The map obtained at the endpoint of that isotopy is the desired $\phi$. We note the following:
\begin{itemize} \itemsep1em
\item
While $\phi$ depends on auxiliary choices made during the construction, the class $[\phi] \in \pi_0(\Symp^K(M))$ is well-defined. This means that it depends only on the given Lefschetz fibration, the choice of $V$, and the isotopy \eqref{eq:return-map}. 

\item If \eqref{eq:return-map} can be lifted to an isotopy of Lagrangian embeddings $S^n \rightarrow P$, one can achieve that $\phi \in \Symp^{K,\mathit{fix}}(M)$.

\item There is a commutative diagram
\begin{equation} \label{eq:include-diagram}
\xymatrix{
H_*(\bar{M}, K) \ar[d]_-{\phi_*}
\ar^-{\iso}[r] & 
H_*(E,V) 
\ar[r] & 
H_*(E,P) \ar[d]^-{(\beta_{2\pi})_*}
\\
H_*(\bar{M}, K) 
\ar^-{\iso}[r] & 
H_*(E,V) 
\ar[r] & 
H_*(E,P).
}
\end{equation}
\end{itemize}

\begin{remark} \label{th:eating-crow}
At this point, it may be worth while discussing under what circumstances one can get symplectic automorphisms of $M$ which are actually compactly supported. Suppose that \eqref{eq:return-map} can be strengthened to the following: there are isotopies (of exact Lagrangian spheres, compatible with parametrizations in the sense of \eqref{eq:para-k})
\begin{equation} \label{eq:vi}
\tau_{V_0} \cdots \tau_{V_r}(V_i) \htp V_i \quad
\text{for $i = 0,\dots,r$.}
\end{equation}
In that case, one can construct a diffeomorphism of $E$ which maps each fibre to itself, is fibrewise symplectic, and whose restriction to $P$ is the inverse of \eqref{eq:total-monodromy} (to be precise, this may only be possible if the symplectic form near the critical points is chosen to be standard in holomorphic Morse charts; we assume from now on that this technical condition is satisfied). Composing this with $\beta_{2\pi}$ yields another lift of $b_{2\pi}$ to a fibrewise symplectic automorphism, whose restriction to $P$ is the identity. Let's denote this lift by $\tilde{\beta}_{2\pi}$.

By looking at how $\tilde{\beta}_{2\pi}$ behaves on the boundary fibres, one gets a class 
\begin{equation} \label{eq:loop}
[t \mapsto 
(\beta_t|P)^{-1} \circ (\tilde{\beta}_{2\pi}|\pi^{-1}(e^{it})) \circ (\beta_t|P)] \in 
\pi_1(\Symp(P, \partial P)). 
\end{equation}
Let's also suppose for simplicity that $H^1(P,\partial P) = 0$, so that there is no difference between symplectic and Hamiltonian isotopy rel $\partial P$. If the class \eqref{eq:loop} is trivial, one can deform $\tilde{\beta}_{2\pi}$ to be the identity on $\pi^{-1}(\partial D)$. From that, one can then produce a compactly supported symplectic automorphism of the Liouville manifold $M$.

In terms of the original data entering the construction, the class \eqref{eq:loop} can be described as follows. For each $i$, the choice of an isotopy \eqref{eq:vi} gives rise to an isotopy inside $\Symp(P, \partial P)$,
\begin{equation}
\tau_{V_i}^{-1} (\tau_{V_0} \cdots \tau_{V_r}) \tau_{V_i} \htp
\tau_{V_0} \cdots \tau_{V_r}. 
\end{equation}
The composition of all those isotopies yields an isotopy between $\tau_{V_0} \cdots \tau_{V_r}$ and itself, which is related to \eqref{eq:loop} by composition with $\tau_{V_0} \cdots \tau_{V_r}$ (in the first version of this paper, it was erroneously assumed that \eqref{eq:loop} would automatically be contractible).
\end{remark}

We now add gradings to our discussion.

\begin{assumption} \label{as:grading-argument}
Assume that $c_1(E) = 0$ and $H^1(E) = 0$, so that we can introduce gradings on $E$ in an essentially unique way (these then induce gradings on $P$). Suppose that we have a vanishing cycle $V = \partial \Delta$ satisfying \eqref{eq:return-map}, as well as a closed Lagrangian submanifold $L \subset E$ with $H^1(L) = 0$.
\end{assumption}

\begin{lemma} \label{th:grading-argument}
In the situation of Assumption \ref{as:grading-argument}, suppose that the graded lift of \eqref{eq:return-map} to an isotopy
\begin{equation} \label{eq:return-map-2}
\tau_{V_0}^\sharp \cdots \tau_{V_r}^\sharp(V^\sharp) \htp V^\sharp[k]
\end{equation}
has $k \neq 2$. Then $[\phi] \in \pi_0(\Symp^K(M))$ has infinite order.
\end{lemma}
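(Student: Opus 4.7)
The plan is to follow the strategy of Lemma~\ref{th:hf-2}, using the closed Lagrangian $L \subset E \subset M$ of Assumption~\ref{as:grading-argument} as the test object. Since the Legendrian loop $(K_t)$ defining $\phi$ can be embedded in a contact isotopy of $\partial_\infty M$ and then extended to a symplectic isotopy $(\phi_t)$ in $\Symp(M)$ supported in an arbitrarily small neighbourhood of $\partial_\infty M$, the isotopy can be chosen trivial on any prescribed compact subset, in particular on $L$. The canonical graded lift $(\phi_t^\sharp)$ starting at the identity then satisfies $\phi_1^\sharp(L^\sharp) = L^\sharp$, while outside a compact subset $\phi_1^\sharp(F^\sharp) = F^\sharp[\sigma]$ where $\sigma$ is the graded shift around the loop $(K_t)$. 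Exactly as in Lemma~\ref{th:hf-2}, this forces $\phi^\sharp(L^\sharp) = L^\sharp[-\sigma]$ for the canonical grading of $\phi \in \Symp^K(M)$, and the Floer comparison $\mathit{HF}^*(L^\sharp, \phi^\sharp(L^\sharp)) \iso H^{*-\sigma}(L)$ contradicts the triviality of $[\phi^N]$ for every $N \neq 0$ whenever $\sigma \neq 0$. So it suffices to show $\sigma \neq 0$.

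To compute $\sigma$ I would decompose $(K_t)$ into its three natural pieces and lift each to a graded Legendrian isotopy: (a) the parallel transport $(\beta_t(V))_{0 \le t \le 2\pi}$; (b) the Picard--Lefschetz isotopy $\beta_{2\pi}(V) \htp \tau_{V_0}\cdots\tau_{V_r}(V)$ implicit in \eqref{eq:total-monodromy-2}; and (c) the return isotopy \eqref{eq:return-map}. Piece (c) contributes shift $k$ by the hypothesis \eqref{eq:return-map-2}. Pieces (a) and (b) together define a specific graded Legendrian isotopy from $V^\sharp$ to $(\tau_{V_0}^\sharp \cdots \tau_{V_r}^\sharp)(V^\sharp)[s]$; the shift $s$ measures the discrepancy between the canonical graded lift $\beta_{2\pi}^\sharp$ of the global monodromy (induced by the grading on $E$) and the product of intrinsically graded Dehn twists on $P$, and so depends only on the Lefschetz fibration itself, not on $V$ or on the return isotopy. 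Adding contributions gives $\sigma = k + s$.

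The main obstacle is identifying $s = -2$, which yields $\sigma = k - 2 \neq 0$ under the assumption $k \neq 2$. My route of choice is calibration against a known model: apply Lemmas~\ref{th:tstarl}--\ref{th:tstarl2} to the Lefschetz fibration with $P = T^*S^{n-1}$ and two vanishing cycles both equal to the zero-section carrying the trivial parametrization. Then $M \iso T^*S^n$, $F$ is a cotangent fibre, and the resulting $\phi$ coincides with the $\rho = \tau^{-2}$ of Section~\ref{sec:first-def}. Lemma~\ref{th:shift} computes $\sigma = 2 - 2n$ directly, while \eqref{eq:self-shift} applied twice inside $T^*S^{n-1}$ yields $k = 2(2-n) = 4-2n$; comparison forces $s = -2$. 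Because $s$ is determined locally near $\partial D$ under the hypotheses $c_1(E) = 0$ and $H^1(E) = 0$, and in particular is independent of the choice of vanishing cycle and of the return isotopy, the same value applies in the general setting, completing the argument. A more conceptual alternative would be a direct Maslov-index-for-paths calculation along $\beta_t$-orbits at the horizontal boundary of $E$, in the spirit of the proof of Lemma~\ref{th:shift}.
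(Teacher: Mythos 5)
Your overall skeleton matches the paper's: reduce to showing that the loop of Legendrians defining $\phi$ has nonzero total graded shift, then run the argument of Lemma~\ref{th:hf-2} with the closed Lagrangian $L$ of Assumption~\ref{as:grading-argument} in place of the zero-section (choosing the extending symplectic isotopy to be constant on a compact set containing $L$). That part is fine. The problem is the step you yourself flag as the main obstacle, the identification $s=-2$, where your calibration argument has two genuine gaps.

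First, to compute $\sigma$ in the model case ($P=B^*S^{n-1}$, both vanishing cycles standard, $M\iso T^*S^n$) you assert that the monodromy-constructed $\phi$ ``coincides with'' $\rho=\tau^{-2}$ of Section~\ref{sec:first-def} and then import the shift $2-2n$ from Lemma~\ref{th:shift}. But Lemma~\ref{th:shift} computes the graded shift of the geodesic-flow loop $(\partial_\infty\phi_t(K))$, which is a priori a different loop of Legendrians from the one produced by the Lefschetz construction; the shift is a homotopy invariant of the loop, so transferring it requires identifying the two loops (up to homotopy through Legendrian loops, compatibly with gradings). This is exactly the comparison the paper explicitly declines to prove (``we will not actually prove that the two approaches have the same outcome''), and it is not a formality. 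Second, even granting the model value, your transfer to a general fibration rests on the assertion that $s$ is ``determined locally near $\partial D$'' and hence independent of the fibre, the number and configuration of vanishing cycles, etc. That universality is true, but it is precisely the nontrivial content here, and asserting it is not proving it: the product $\tau_{V_0}^\sharp\cdots\tau_{V_r}^\sharp$ and the isotopy \eqref{eq:total-monodromy} are not objects localized near $\partial D$, so some argument is needed that the whole discrepancy is concentrated in the base rotation. The paper's proof supplies exactly this missing computation, and does so directly rather than by calibration: it follows the path of thimbles $\beta_t(\Delta)$ (not just the vanishing cycles) together with the graded return isotopy \eqref{eq:return-map-2}, obtaining a path of Lagrangians in $E$ whose endpoints agree near the boundary, and observes that the endpoint gradings differ by $k-2$, the $2$ coming from the full rotation of the base paths $b_t(\pi(\Delta))$ near $\partial D$. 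This is the ``direct Maslov-index-for-paths calculation'' you mention only as an alternative; as your proposal stands, the central identification $s=-2$ is not established.
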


\begin{proof}
Suppose for simplicity that $(b_t)$ is rotation near $\partial D$. From $\beta_t(\Delta)$ and \eqref{eq:return-map-2}, we get a path of Lagrangian submanifolds in the total space $E$, whose endpoints (at times $t = 0$ and $t = 2\pi$) agree near the boundary. If we lift that to a path of graded Lagrangian submanifolds, then the gradings at the endpoints differ by $k -2$; the $k$ comes from \eqref{eq:return-map-2}, and the additional $2$ comes from the fact that near the boundary, the paths $b_t(\pi(\Delta))$ undergo a full rotation.

The same holds for gradings of the loop of Legendrian submanifolds associated to our Lagrangian submanifolds. Hence, assuming $k \neq 2$, the same argument as in Lemma \ref{th:hf-2} applies (using the Lagrangian submanifold from Assumption \ref{as:grading-argument} instead of the zero-section) and yields the desired result.
\end{proof}

\section{Construction via Lefschetz fibrations\label{sec:second-def}}

Take the Lefschetz fibration $\pi: E \rightarrow D$ from Lemma \ref{th:2-handle}, where the fibre is $P = B^*S^{n-1}$, and the two vanishing cycles $V_0 = V_1 = S^{n-1}$ are parametrized by $f_0 = \mathit{id}$ and $f_1 = f$, respectively. By Lemmas \ref{th:tstarl} and \ref{th:tstarl2}, the associated Liouville manifold $M$ is the cotangent bundle $T^*L$ of the homotopy sphere $L = S_f$ associated to $f$. 

To determine the global monodromy of our Lefschetz fibration, we write down the Dehn twists along the vanishing cycles:
\begin{equation} \label{eq:twisted-twist}
\begin{aligned}
& \tau_{V_0} = \tau, \\ 
& \tau_{V_1} = (T^*f)^{-1}\, \tau\,(T^*f), 
\end{aligned}
\end{equation}
where as usual $T^*f$ is the symplectic automorphism induced by $f$. Hence, if we take $V = S^{n-1}$ to be another copy of the zero-section (seen as an oriented Lagrangian sphere in $P$), then
\begin{equation}
\tau_{V_0}\tau_{V_1}(V) = V,
\end{equation}
so Assumption \ref{th:return-map} is satisfied (with the constant isotopy). As explained there, this leads to the construction of a class in $\pi_0(\Symp^K(M))$, which we denote by $[\sigma]$ in this particular case. Here are some consequences of the definition:

\begin{itemize} \itemsep1em
\item The construction of $\sigma$ depends (roughly speaking) on writing $M = T^*L$ as the total space of a Lefschetz fibration, which is turn is the complexification of a Morse function on $L$. This means that $[\sigma] \in \pi_0(\Symp^K(M))$ is not necessarily unique.

\item From \eqref{eq:twisted-twist} one sees that $\tau_{V_0}\tau_{V_1}|V = af^{-1}af \in \mathit{Diff}^+(S^{n-1})$. This is the image of $[\sigma]$ under the map $\pi_0(\Symp^K(M)) \rightarrow \pi_0(\Diff^+(S^{n-1}))$ from \eqref{eq:map-to-theta-n}, and that has the same implications as in \eqref{eq:afaf}.

\item Since $[V]$ generates $H_n(P)$, the horizontal maps in \eqref{eq:include-diagram} are isomorphisms. Hence we can use \eqref{eq:monodromy-formula} to determine the action of $\sigma$ on $H_*(\bar{M},K)$. The intersections of the two vanishing cycles $V_0 \cdot V_1$ vanishes if $n$ is even, and is $\pm 2$ if $n$ is odd. In the latter case, the sign is arbitrary, since it depends on the choice of orientations. The outcome is
\begin{equation}
\sigma_* = \begin{pmatrix} 3 & \pm 2 \\ \mp 2 & -1 \end{pmatrix} \text{ for odd $n$,} \quad
\sigma_* = \begin{pmatrix} 1 & 0 \\ 0 & 1 \end{pmatrix} \text{ for even $n$.} 
\end{equation}
These matrices are conjugate to those in \eqref{eq:rho-action}. This shows that if $n$ is odd, $[\sigma] \in \pi_0(\Diff^K(M))$ has infinite order. In contrast, for even $n$, $[\sigma] \in \pi_0(\Diff^K(M))$ has finite order, by Lemma \ref{th:pseudo-isotopy}.
\end{itemize}

\begin{lemma}
With respect to the canonical gradings of Dehn twists, one has
\begin{equation}
\tau_{V_0}^\sharp \tau_{V_1}^\sharp(V^\sharp) = V^\sharp[4-2n].
\end{equation}
\end{lemma}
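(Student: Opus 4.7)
The plan is to reduce the computation to two applications of \eqref{eq:self-shift}, combined with the conjugation identity \eqref{eq:twisted-twist}. Everything takes place inside the fibre $P = B^*S^{n-1}$, whose zero-section is an $(n-1)$-sphere, so the relevant specialization of \eqref{eq:self-shift} is $\tau^\sharp(V^\sharp) = V^\sharp[2-n]$: the graded Dehn twist formula from \cite[Lemma 5.7]{seidel99} applied to $S^{n-1}$ in place of $S^n$.

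Next I would record how gradings behave under the cotangent lift $T^*f$ of an element $f \in \Diff^+(S^{n-1})$. With gradings on $P$ defined via the vertical Lagrangian subbundle ${\mathcal F} \subset TP$ (the convention used in the proof of Lemma \ref{th:shift}), the map $T^*f$ preserves ${\mathcal F}$ pointwise, so its canonical graded lift $(T^*f)^\sharp$ fixes the standard grading of the zero-section: $(T^*f)^\sharp(V^\sharp) = V^\sharp$ with no shift, and likewise for the inverse.

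Combining these two ingredients with the fact that canonical graded lifts compose---so \eqref{eq:twisted-twist} lifts to $\tau_{V_1}^\sharp = ((T^*f)^{-1})^\sharp\,\tau^\sharp\,(T^*f)^\sharp$---the computation becomes
\begin{equation*}
\tau_{V_1}^\sharp(V^\sharp) = ((T^*f)^{-1})^\sharp\,\tau^\sharp(V^\sharp) = ((T^*f)^{-1})^\sharp\bigl(V^\sharp[2-n]\bigr) = V^\sharp[2-n],
\end{equation*}
and then a further application of $\tau_{V_0}^\sharp = \tau^\sharp$ contributes another shift by $[2-n]$, giving $V^\sharp[4-2n]$ as claimed.

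The only subtle point I anticipate is verifying that \eqref{eq:twisted-twist} lifts to canonical gradings without a stray shift. I expect this to follow from the characterization of the canonical grading of a Dehn twist $\tau_V$ as the unique one making $\tau_V^\sharp(V^\sharp)$ agree with $V^\sharp$ outside a compact subset, which is manifestly invariant under conjugation by any graded symplectomorphism fixing $V^\sharp$---in particular, by $(T^*f)^\sharp$. With this bookkeeping in place, I expect no substantive obstacle beyond what is already recorded in \cite{seidel99}.
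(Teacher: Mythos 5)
Your proposal is correct and follows essentially the paper's route: the whole content is \eqref{eq:self-shift} applied to $(n-1)$-spheres, giving a shift of $[2-n]$ for each of the two twists. The detour through the conjugation \eqref{eq:twisted-twist} is harmless but unnecessary, since $\tau_{V_1}$ is itself a Dehn twist along the zero-section $V_1 = V$, so \eqref{eq:self-shift} applies to it directly without any bookkeeping about $(T^*f)^\sharp$.
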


This follows immediately from \eqref{eq:self-shift}, here applied to spheres of dimension $n-1$. In view of Proposition \ref{th:grading-argument} (with $L$ the zero-section), we see that $[\sigma] \in \pi_0(\Symp^K(M))$ has infinite order, which yields our second proof of Theorem \ref{th:main}.

\begin{remark}
Suppose that $n$ is even. Take a compactly supported symplectic isotopy $(\psi_t)$ with $\psi_0 = \tau$, and such that $\psi_1 = \mathit{id}$ on a neighbourhood of the zero-section (this exists because $a \in O(n)$ can be deformed to the identity). By making that neighbourhood sufficiently large, one can achieve that $\psi_1$ commutes with (the compactly supported maps) $\tau$ and $(T^*f)^{\pm 1}\tau(T^*f)^{\mp 1}$. This yields a loop in the compactly supported symplectic automorphism group $\Symp^c(T^*S^{n-1})$, namely
\begin{equation}
t \longmapsto \tau^{-1} (T^*f) \tau^{-1} (T^*f)^{-1} \psi_t^{-1} 
(T^*f) \psi_t^{-1} \tau (T^*f)^{-1} \tau (T^*f) \psi_t (T^*f)^{-1} \psi_t.
\end{equation}
Its homotopy class is a concrete instance of the element of $\pi_1(\Symp^c(T^*S^{n-1})) \iso \pi_1(\Symp(P,\partial P))$ appearing in \eqref{eq:loop}, which is an obstruction for constructing a compactly supported version of $\sigma$. Note that this homotopy class corresponds to one particular choice of isotopies \eqref{eq:vi}, and is not an invariant of the problem itself.
\end{remark}


\end{document}